\crefname{figure}{Figure}{Figures}
\pgfplotsset{compat=newest}
\newtheorem{theorem}{Theorem}[section]
\newtheorem{lemma}[theorem]{Lemma}
\newtheorem{assumption}[theorem]{Assumption}
\newtheorem{conjecture}[theorem]{Conjecture}
\theoremstyle{definition}
\theoremstyle{remark}
\newtheorem{remark}[theorem]{Remark}
\numberwithin{theorem}{section}
\numberwithin{equation}{section}
\numberwithin{table}{section}
\numberwithin{figure}{section}
\newcommand{\R}{\mathbb R}
\newcommand{\N}{\mathbb N}
\newcommand{\mcT}{\mathcal{T}}
\newcommand{\mcP}{\mathbb{P}}
\newcommand{\mcA}{\mathcal{A}}
\newcommand{\mcV}{\mathcal{V}}
\newcommand{\norm}[2]{\left\|#1\right\|_{#2}}
\newcommand{\normeps}[1]{\left\|#1\right\|_{V,\varepsilon}}
\newcommand{\seminorm}[2]{\left|#1\right|_{#2}}
\newcommand{\Pe}{{\rm{Pe}}}
\newcommand{\inner}[3]{\left( #1,#2\right)_{#3}}
\newcommand{\dual}[3]{\langle #1,#2\rangle_{#3}}
\newcommand{\abs}[1]{\left|#1\right|}
\newcommand{\Div}{\operatorname{div}}
\newcommand{\proj}{\Pi}
\newcommand{\eps}{\varepsilon}
\newcommand{\D}{\: \mathrm{d}}
\newcommand{\GalSol}{u_H}
\newcommand{\PiGalSol}{u_H}
\newcommand{\PiSLODSol}{u_{H}^{\ell}}
\newcommand{\GalSpace}{V_H}
\newcommand{\SLODSpace}{V_{H}^{\ell}}
\newcommand{\GalTest}{v_H}
\newcommand{\PiSLODtest}{v_{H}^{\ell}}
\begin{document}
\title[SLOD for convection-dominated diffusion]{Super-localized orthogonal decomposition for convection-dominated diffusion problems}
\author[]{Francesca Bonizzoni$^\dagger$, Philip Freese$^\dagger$, Daniel Peterseim$^\ddagger$}
\address{${}^{\dagger}$ Institute of Mathematics, University of Augsburg, Universit\"atsstr.~12a, 86159 Augsburg, Germany}
\address{${}^{\ddagger}$ Institute of Mathematics \& Centre for Advanced Analytics and Predictive Sciences (CAAPS), University of Augsburg, Universit\"atsstr.~12a, 86159 Augsburg, Germany}
\email{\{francesca.bonizzoni, philip.freese, daniel.peterseim\}@uni-a.de}
\thanks{The work of all authors is part of a project that has received funding from the European Research Council ERC under the European Union's Horizon 2020 research and innovation program (Grant agreement No.~865751).}

\begin{abstract}
This paper presents a multi-scale method for convection-dominated diffusion problems in the regime of large P\'eclet numbers. The application of the solution operator to piecewise constant right-hand sides on some arbitrary coarse mesh defines a finite-dimensional coarse ansatz space with favorable approximation properties. For some relevant error measures, including the $L^2$-norm, the Galerkin projection onto this generalized finite element space even yields $\varepsilon$-independent error bounds, $\varepsilon$ being the singular perturbation parameter. By constructing an approximate local basis, the approach becomes a novel multi-scale method in the spirit of the Super-Localized Orthogonal Decomposition (SLOD). The error caused by basis localization can be estimated in an a-posteriori way. In contrast to existing multi-scale methods, numerical experiments indicate $\varepsilon$-independent convergence without preasymptotic effects even in the under-resolved regime of large mesh P\'eclet numbers. 
\end{abstract}

\maketitle

\vspace{1cm}
\noindent\textbf{Key words:}
Convection-dominated diffusion;
numerical homogenization;\\
multi-scale method;
super-localization;
singularly perturbed
\\[2ex]
\textbf{AMS subject classifications:} 65N12, 65N15, 65N30, 35B25
 	
%%%%%%%%%%%%%%%%%%%%%%%%%%%%%%%%%%%%%%%%%%%%%%%%%%%%%%%%%%%%%%%%%%%%%%%%%%%%%%
\section{Introduction}
%%%%%%%%%%%%%%%%%%%%%%%%%%%%%%%%%%%%%%%%%%%%%%%%%%%%%%%%%%%%%%%%%%%%%%%%%%%%%%

This paper studies the numerical solution of the following singularly perturbed con\-vec\-tion-diffusion problem in a bounded polygonal domain $\Omega\subset\R^d$ with dimension $d=1,2,3$. Given some small diffusivity $0<\varepsilon\ll 1$, an incompressible (divergence-free) and bounded velocity field $b$ as well as an external force $f$, we look for $u$ such that the boundary value problem
\begin{equation}
\label{eq:pde}
\left\{\begin{array}{rl}
-\varepsilon\Delta u+b\cdot \nabla u=f&\text{in }\Omega\\
u=0&\text{on }\partial\Omega
\end{array}\right.
\end{equation}
holds in suitably weak sense. 

This fairly simple model problem appears to be very challenging for classical Galerkin finite element methods (FEMs) and related schemes when the ratio of the convection rate over the diffusion is large, that is, for large P\'eclet number $\Pe=\norm{b}{L^\infty(\Omega)}\varepsilon^{-1}$. In this regime, the solution $u$ typically develops exponential and parabolic layers at the boundary (and possibly interior layers in the presence of inhomogeneous Dirichlet data). Unless the width $h$ of the FE mesh resolves the characteristic length scale $1/\Pe\approx \varepsilon$ of these layers, FE approximations show spurious oscillations. To avoid this unstable preasymptotic behavior, a minimal resolution condition of the form $h\Pe\lesssim 1$ is typically required. However, in many relevant practical applications, $\varepsilon$ may be so small that such conditions are unfeasible. 

The circumvention or at least relaxation of this resolution condition has been subject of intensive research in the past few decades. We refer to the monograph \cite{Roos-Stynes-Tobiska} for a detailed overview on the subject. Several branches of solution strategies have been developed. One is based on mesh refinement or grading toward the layers~\cite{Bakhvalov,Miller-ORiordan-Shishkin, Farrell-Hegarty-Miller-ORiordan-Shishkin,Melenk2002}.
The more popular alternative, in particular in the engineering communities, is the class of stabilized methods. Roughly speaking, these approaches change the model on the continuum or discrete level by adding artificial diffusion along the negative velocity field (upwinding). Among the extensive number of existing approaches in this context, we mention the streamline upwind/Petrov--Galerkin method~\cite{Brooks-Hughes} (also known as streamline diffusion method - see, e.g., \cite{Johnson}), the Galerkin least-squares method~\cite{Hughes-Franca-Hulbert}, the Douglas--Wang Galerkin method~\cite{Franca-Frey-Hughes}, discontinuous Petrov--Galerkin methods~\cite{Demkowicz-Gopalakrishnan-Niemi, Li-Demkowicz}, hybridizable discontinuous Galerkin methods~\cite{Qiu-Shi}, residual-free bubble methods~\cite{Brezzi-Marini-Suli,Cangiani-Suli}, nonconforming stabilized virtual element methods~\cite{Berrone-Borio-Manzini} and edge-based methods with additional nonlinear diffusion~\cite{Barrenechea-Burman-Karakatsani}. 

It has been observed that many of these stabilized schemes are strongly related to multi-scale methods, which mark a third class of approaches to tackle strong convection
\cite{Hughes-Sangalli}. The essential idea of multi-scale methods is to resolve the fine-scale features such as strong gradients in the layers by locally precomputed generalized FE shape functions. Prime examples are variational multi-scale methods (VMS)~\cite{Hughes-Feijoo-Mazzei-Quincy, Larson-Maalqvist, Volker-Songul-William,doi:10.1137/090775592}, multi-scale FEMs~\cite{PARK-HOU, degond_lozinski_muljadi_narski_2015}, multi-scale hybrid-mixed methods~\cite{Harder-Paredes-Valentin}, multi-scale discontinuous Galerkin methods~\cite{Kim-Wheeler, Chung-Leung}, multi-scale virtual element methods~\cite{Xie-Wang-Feng}, multi-scale stabilization methods~\cite{Calo-Chung-Efendiev-Leung,Chung-Efendiev-Leung}, stabilization procedures by means of sub-grid scale~\cite{Codina}, energy minimizing generalized multi-scale methods~\cite{Zhao-Chung}, 
or the multi-scale method for time-dependent convection-dominant problems recently proposed in~\cite{Simon-Behrens}.

Although many of the approaches mentioned so far have been empirically successful in  applications and certainly improved upon the stability of standard FEMs, $\varepsilon$-independent behavior is hardly observed for large mesh P\'eclet numbers $h\Pe\gg 1$. 

This statement also applies to the Localized Orthogonal Decomposition (LOD) method which originated from VMS and is often referred to as numerical homogenization (for an overview on the topic, see~\cite{Altmann-Henning-Peterseim,Maalqvist-Peterseim,Owhadi-Scovel}). On an ideal level, the methodology realizes a prescribed projection of the unknown solution onto a discrete space (other than the Ritz projection) and, hence, allows best-approximation results in suitable norms independent of the P\'eclet number. However, existing practical versions based on the localization of the fine-scale Green's function \cite{Hughes-Sangalli,Elfverson,Pet15LODreview} do suffer from strong convection. While for moderate mesh P\'eclet numbers exponential decay results of \cite{MalqvistPeterseim2014,KPY18}  for the fine-scale Green's function still apply, they deteriorate with increasing mesh P\'eclet numbers as outlined in \cite{Li-Peterseim-Schedensack}. This prevents the construction of a localized basis by means of fine-scale correctors and limits the practical relevance of the approach. 

An alternative localization strategy was recently proposed in ~\cite{Hauck-Peterseim}  for the pure diffusion problem and then extended to indefinite and non-hermitian problems in~\cite{Freese-Hauck-Peterseim}. As outlined  in~\cite{Altmann-Henning-Peterseim}, the LOD (and also the VMS) implicitly computes its problem-adapted ansatz space by applying the solution operator to some classical FE spaces on coarse meshes. For the specific choice of piecewise constants the coarse space is simply given by the span of functions $\mcA^{-1}\mathbf{1}_T$, $\mcA^{-1}$ denoting the solution operator and
$\mathbf{1}_T$ being the characteristic function of the element $T$ ranging into a coarse mesh $\mcT_H$. 
We refer to the Galerkin projection method on such ansatz space as ideal method.
The novel localization strategy aims to identify local linear combinations of characteristic functions in such a way that the spread of the response under the solution operator is minimized. Since for the diffusion model problem this strategy yields a super-exponentially decaying localization error (as compared to the exponentially decaying localization error in classical LOD) the resulting practical method is referred to as Super-Localized Orthogonal Decomposition (SLOD).

The present paper shows that the super-localization strategy is not merely an amplification of the fine-scale Green's function, but allows localization in applications where it has not been observed before. We generalize the SLOD methodology to convection-diffusion problems with large P\'eclet number.  

The SLOD approximation error comprises two contributions: the discretization error of the ideal method and the localization error. As such, the error analysis consists of two major steps. The key result to bound the first contribution is contained in \cref{lem:a_priori}, where a-priori estimates for the continuous convection-diffusion problem with linear velocity field are proved. Thanks to this result, $\varepsilon$-explicit (and in particular cases, even $\varepsilon$-independent) error upper bounds for the ideal method are derived. The second contribution, instead, is proved to be proportional to the computable quantity $\sigma$~\cref{eq:sigma}, which reflects the worst-case localization error.

Notably, the SLOD basis functions display an $\varepsilon$-independent behaviour. Indeed, as $\varepsilon$ gets smaller, they are not affected by oscillations nor their support increases (see \cref{fig:1D_basis} and \cref{fig:2d_basis} for a representation in the one- and two-dimensional frameworks). This represents a major improvement with respect to both classical LOD and the state-of-the-art multi-scale method in~\cite{Li-Peterseim-Schedensack}. From a practical point of view, this translates into significant computational savings, which in turn makes computations possible even in the three-dimensional framework (see \cref{sec:3D} for 3D numerical experiments).

The remainder of the paper is organized as follows. In \cref{sec:model_problem} a detailed description of the problem of interest in its variational formulation is shown, and a-priori upper bounds for the continuous solution of the convection-diffusion problem with affine velocity field are proven.
An ideal numerical homogenization method based on the $L^2$-orthogonal projection onto piecewise constants is introduced in \cref{sec:ideal_method}. The core of the paper are \cref{sec:super_localization,sec:SLOD}, where the novel localization approach is presented and turned into a practically feasible method. In \cref{sec:error_analysis} the error analysis is carried out.
\Cref{sec:numerical_implementation} explains the SLOD algorithm and in \cref{sec:numerical_experiments}  its performances are displayed by means of several two- and three-dimensional numerical experiments.

%%%%%%%%%%%%%%%%%%%%%%%%%%%%%%%%%%%%%%%%%%%%%%%%%%%%%%%%%%%%%%%%%%%%%%%%%%%%%%%%
\section{Model problem}
\label{sec:model_problem}
%%%%%%%%%%%%%%%%%%%%%%%%%%%%%%%%%%%%%%%%%%%%%%%%%%%%%%%%%%%%%%%%%%%%%%%%%%%%%%%%

Let $\Omega\subset\R^d$ be a polygonal domain with $d=1,2,3$, let $0<\varepsilon\le 1$ be a singular perturbation parameter and $b\in L^\infty(\Omega;\R^d)$ satisfy $\Div b=0$. Let $V\coloneqq H^1_0(\Omega)$ and define the bilinear form $a\colon V\times V\rightarrow\R$ by 
\begin{equation}
\label{eq:a}
a(u,v)\coloneqq \varepsilon\int_\Omega\nabla u\cdot\nabla v\, \D x+ \int_\Omega (b\cdot\nabla u) v\, \D x
\end{equation}
for all $u,\, v\in V$. Given some linear functional $F\in V'\coloneqq H^{-1}(\Omega)$ on $V$ then the weak formulation of the boundary value problem~\cref{eq:pde} seeks $u\in V$ such that, for all $v\in V$, 
\begin{equation}
\label{eq:pde_weak}
a(u,v)=F(v).
\end{equation}
From now on, we assume that the right-hand side is a bit more regular than minimal, i.e., it is of the form $F(\bullet)\coloneqq\inner{f}{\bullet}{L^2(\Omega)}$ for some $f\in L^2(\Omega)$. This additional regularity of the right-hand side will give rise to orders of approximations.
We focus on the convection-dominated regime, namely, $\varepsilon\ll 1$ and P\'eclet number $\Pe=\norm{b}{L^\infty(\Omega)}\varepsilon^{-1}\gg 1$.

\begin{remark}
	The method proposed below naturally applies to the case of non-constant diffusion coefficients, which may incorporate multi-scale features, i.e., the constant diffusivity $\varepsilon$ can be replaced by a variable one of the form $\eps A$ where $A\in L^\infty(\Omega;\R^{d\times d})$ is symmetric and positive definite almost everywhere in $\Omega$. Moreover, the method can be generalized to the case of convection-diffusion-reaction equations in a straight-forward way.
\end{remark}

Since $\Div b=0$, integration by parts implies, for all $v\in V$,
\begin{equation}\label{eq:wp}
a(v,v)=\varepsilon\int_\Omega\abs{\nabla v}^2\, \D x + \int_\Omega (b\cdot\nabla v) v\, \D x = \varepsilon \seminorm{v}{V}^2,
\end{equation}
where $\seminorm{\bullet}{V}=\norm{\nabla\bullet}{L^2(\Omega)}$ denotes the $H^1$-seminorm, which is a norm in $V$.
Moreover, for all $u,v\in V$, the application of Cauchy--Schwarz's and Poincar\'e's inequalities readily implies 
\begin{align}\label{eq:wp2}
a(u,v)&\leq C_a\seminorm{u}{V}\seminorm{v}{V},
\end{align}
for $C_a=C_a(\Omega,\norm{b}{L^\infty(\Omega)})=\varepsilon+C_P \norm{b}{L^\infty(\Omega)} >0$, where $C_P$ denotes the Poincar\'e constant.
By the Lax-Milgram theorem, the coercivity \cref{eq:wp} and the boundedness \cref{eq:wp2} show that Problem~\cref{eq:pde_weak} admits a unique solution $u\in V$ that satisfies the $\varepsilon$-dependent stability estimate
\begin{equation}
\label{eq:u_bound}
\seminorm{u}{V}\leq \frac{C_a}{\varepsilon}\norm{F}{H^{-1}(\Omega)}.
\end{equation}
For $F(\bullet)=(f,\bullet)_{L^2(\Omega)}$ and special velocities, the estimate can be sharpened. More importantly, in the weaker $L^2(\Omega)$-norm, even $\varepsilon$-independent stability results are possible. We refer to \cite[Lemma 2.1]{Eriksson-Johnson} which covers the special case $b = \begin{pmatrix}1 & 0 \end{pmatrix}^{\top}$. The subsequent lemma generalizes \cite[Lemma 2.1]{Eriksson-Johnson} to velocity fields fulfilling the following technical assumption:
\begin{assumption}
	\label{ass:b}
	The divergence-free velocity field $b$ is affine and such that for all $x\in\Omega$, $b(x) \neq 0$.
\end{assumption}
The result is phrased in the $\varepsilon$-scaled norm of $V$
\begin{equation}
\label{eq:normeps}
\normeps{\bullet}^2\coloneqq\varepsilon\seminorm{\bullet}{V}^2+\norm{\bullet}{L^2(\Omega)}^2,
\end{equation}
which is equivalent to the $\seminorm{\bullet}{V}$-norm for $\varepsilon\leq 1$, since for all $v\in V$ there holds
\begin{equation}
\label{eq:norm_equiv}
\sqrt{\varepsilon}\seminorm{v}{V}\leq\normeps{v}\leq \sqrt{1+C_P^2}\seminorm{v}{V}.
\end{equation}
\begin{lemma}
	\label{lem:a_priori}
	Let the constants $c_b,C_b>0$ be such that for all $x\in\Omega$, $c_b \leq \exp(-b\cdot x) \leq C_b$. Moreover, introduce $b_{\infty} \coloneqq \norm{2b-b(0)}{L^\infty(\Omega)}$ as well as $B_{\infty} \coloneqq \norm{b-b(0)}{L^\infty(\Omega)}$ and assume $\eps\leq 1 - \tfrac{B_{\infty}}{b_{\infty}}$. If the velocity field $b$ satisfies \cref{ass:b}, then the unique solution of \cref{eq:pde_weak} with $f\in L^2(\Omega)$ satisfies the estimate
	\begin{align*}
	\normeps{u} \leq \frac{C_{b}}{c_{b}\sqrt{b_{\infty}}\sqrt{(1-\eps) b_{\infty} - B_{\infty}} } \left(\frac{c_b}{C_b} + \frac{1}{b_{\infty} ((1-\eps) b_{\infty} - B_{\infty})}\right)^{1/2} \norm{f}{L^2(\Omega)}.
	\end{align*}
	In particular, for $\eps \leq \tfrac{1}{2}\left(1 - \tfrac{B_{\infty}}{b_{\infty}}\right)$, there holds
	\begin{align}
	\label{eq:stability}
	\normeps{u} \leq C_{stab} \norm{f}{L^2(\Omega)},
	\end{align}
	with $C_{stab}$ positive and $\varepsilon$-independent.
\end{lemma}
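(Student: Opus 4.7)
Plan: I would follow the Eriksson--Johnson strategy of \cite[Lemma~2.1]{Eriksson-Johnson}, adapted to the affine velocity field. The idea is to test the weak formulation \cref{eq:pde_weak} with $v = uw$ for the positive weight $w(x) := \exp(-b\cdot x)$ already bounded pointwise by $c_b\le w\le C_b$. Writing $w = \exp(-\phi)$, the weighted and unweighted $L^2$-norms on $\Omega$ are equivalent up to these constants, so it suffices to control $u$ in the weighted norm.

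First I would derive a weighted identity for $a(u,uw)$. Expanding $\nabla(uw) = w\nabla u + u\nabla w$, integrating by parts on both the diffusion term and (exploiting $\Div b = 0$) on the convection term, I obtain
\begin{equation*}
a(u,uw) \;=\; \varepsilon\!\int_\Omega |\nabla u|^2\, w\,\D x \;-\; \tfrac{\varepsilon}{2}\!\int_\Omega u^2\,\Delta w\,\D x \;-\; \tfrac12\!\int_\Omega u^2\,(b\cdot\nabla w)\,\D x.
\end{equation*}
The affine, divergence-free structure of $b$ forces $\Delta\phi = 0$ (the symmetric part of the linear part of $b$ is trace-free), so $\Delta w = |\nabla\phi|^2\,w$ and the identity collapses to
\begin{equation*}
a(u,uw) \;=\; \varepsilon\!\int_\Omega |\nabla u|^2\, w\,\D x \;+\; \tfrac12\!\int_\Omega u^2\,w\,\bigl[\,b\cdot\nabla\phi \,-\, \varepsilon\,|\nabla\phi|^2\,\bigr]\,\D x.
\end{equation*}

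The second step is the main obstacle: a pointwise lower bound on the bracket. Here $\nabla\phi$ equals $2b-b(0)$ up to the antisymmetric part of the linear part of $b$, which is orthogonal to $x$ and contributes to $|\nabla\phi|^2$ in a controlled way. Rewriting $b\cdot(2b-b(0)) = |2b-b(0)|^2 - (b-b(0))\cdot(2b-b(0))$, bounding $|b-b(0)|\le B_\infty$ and $|2b-b(0)|\le b_\infty$, and using the threshold $\varepsilon\le 1-B_\infty/b_\infty$, one arrives at
\begin{equation*}
b\cdot\nabla\phi \;-\; \varepsilon\,|\nabla\phi|^2 \;\ge\; b_\infty\bigl((1-\varepsilon)\,b_\infty - B_\infty\bigr) \;\ge\; 0.
\end{equation*}
Tracking the antisymmetric cross terms produced by the non-constant part of $b$ (absent from the constant-coefficient argument) and matching them to the precise combination $b_\infty((1-\varepsilon)b_\infty-B_\infty)$ is the delicate technical core of the proof and is where the stated threshold on $\varepsilon$ originates.

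Finally I would close the estimate. From $a(u,uw) = (f,uw)_{L^2}$, a weighted Young inequality $(f,uw)_{L^2}\le \tfrac{1}{2\delta}\!\int f^2 w + \tfrac{\delta}{2}\!\int u^2 w$ with $\delta$ chosen to absorb half of the weighted $L^2$-mass of $u$ into the left-hand side gives simultaneous control of $\varepsilon\!\int|\nabla u|^2 w$ and $\int u^2 w$ by $\int f^2 w$. Converting to unweighted norms via $c_b\le w\le C_b$ produces exactly the prefactor displayed in the statement. The $\varepsilon$-uniform bound \cref{eq:stability} then follows by substituting $\varepsilon\le \tfrac12(1-B_\infty/b_\infty)$, which keeps $(1-\varepsilon)b_\infty-B_\infty\ge \tfrac12(b_\infty-B_\infty)$ uniformly in $\varepsilon$.
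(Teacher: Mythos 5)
Your argument is essentially the paper's: both use the exponential weight $\exp(-b\cdot x)$ in the Eriksson--Johnson style, extract a sign-definite zeroth-order term from a weighted energy identity, and convert back to unweighted norms via $c_b\le\exp(-b\cdot x)\le C_b$; your choice to weight the test function ($v=uw$) rather than transform the dependent variable ($v=\exp(-b\cdot x)u$) as the paper does is an equivalent reformulation, and your constant-coefficient computation yields the analogue of \cref{eq:a_priori_1} (up to harmless factors from the single versus squared weight). The only reservation is that the step you yourself call ``the delicate technical core'' --- the pointwise lower bound $b\cdot\nabla\phi-\eps\,|\nabla\phi|^2\ge b_\infty\bigl((1-\eps)b_\infty-B_\infty\bigr)$ in the affine case, which is precisely where $b_\infty$, $B_\infty$ and the threshold on $\eps$ enter --- is asserted rather than verified; the paper's own proof is equally terse at exactly this point (``the equation corresponding to \cref{eq:a_priori_1} reads\dots''), so your proposal matches the published argument in substance but, like it, leaves the decisive affine estimate unproved.
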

\begin{proof}
	First, we show the result for constant $b = b(0)$. In this case, we have $b_{\infty} = \abs{b}$ and $B_{\infty} = 0$. Consider the transformed dependent variable $v(x) = \exp(-b\cdot x) u(x)$, for all $x\in\Omega$. The first step is to derive the strong formulation for $v$ exploiting \cref{eq:pde}, which yields an equation of the form \cref{eq:pde} with right-hand side depending on $f$, $u$ and $v$. Thereafter, we multiply by $v$ to get
	\begin{align*}
	a(v,v) = \inner{\exp(-b\cdot \bullet) f}{v}{L^2(\Omega)} + 2 \eps \inner{\exp(-b\cdot \bullet) (b\cdot \nabla u)}{v}{L^2(\Omega)} - (\eps + 1) \seminorm{b}{}^2 \norm{v}{L^2(\Omega)}.
	\end{align*}
	Integrating by parts and using $\Div b = 0$, we get
	\begin{align*}
	2 \eps \inner{\exp(-b\cdot \bullet) (b\cdot \nabla u)}{v}{} = 2 \eps \seminorm{b}{}^2 \norm{v}{L^2(\Omega)},
	\end{align*}
	and consequently, using \cref{eq:wp}, we deduce
	\begin{align*}
	\eps \seminorm{v}{V}^2 \leq \norm{\exp(-b\cdot \bullet) f}{L^2(\Omega)} \norm{v}{L^2(\Omega)} + (\eps - 1) \seminorm{b}{}^2 \norm{v}{L^2(\Omega)}^2.
	\end{align*}
	Thus, we find
	\begin{align}\label{eq:a_priori_1}
	\eps \seminorm{v}{V}^2 + (1 - \eps) \seminorm{b}{}^2 \norm{v}{L^2(\Omega)}^2 \leq C_b \norm{f}{L^2(\Omega)} \norm{v}{L^2(\Omega)}.
	\end{align}
	This yields a bound on the $L^2(\Omega)$-norm of $v$ as
	\begin{align*}
	\norm{v}{L^2(\Omega)} \leq \frac{C_b}{(1-\eps) \seminorm{b}{}^2} \norm{f}{L^2(\Omega)}.
	\end{align*}
	Eventually, to bound the $L^2(\Omega)$-norm of the solution $u$, we use
	\begin{align}\label{eq:a_priori_L2}
	\norm{u}{L^2(\Omega)} \leq \frac{1}{c_b} \norm{\exp(-b\cdot \bullet) u}{L^2(\Omega)} = \frac{1}{c_b} \norm{v}{L^2(\Omega)}.
	\end{align} 
	The estimate on the $\seminorm{\bullet}{V}$-norm of the original solution $u$ follows by 
	\begin{align*}
	\eps\seminorm{u}{V}^2 &= a(u,u) = \inner{f}{u}{L^2(\Omega)} \leq \norm{f}{L^2(\Omega)} \norm{u}{L^2(\Omega)} \leq \frac{1}{c_b} \norm{f}{L^2(\Omega)} \norm{v}{L^2(\Omega)} \\
	&\leq \frac{C_b}{c_b (1-\eps) \seminorm{b}{}^2} \norm{f}{L^2(\Omega)}^2.
	\end{align*} 
	By combining the upper bounds on $\varepsilon\seminorm{u}{V}^2$ and $\norm{u}{L^2(\Omega)}$, we derive the desired estimate. For constant $b$, the estimate \cref{eq:stability} holds for $\eps \leq \tfrac{1}{2}$ with $C_{stab}=\sqrt{2}C_b(c_b|b|)^{-1}\sqrt{c_b C_b^{-1} + 2|b|^{-2}}$
	
	The case of an affine velocity fields $b$ follows the same lines. The equation corresponding to \cref{eq:a_priori_1} reads
	\begin{align*}
	\eps \seminorm{v}{V}^2 + \left((1 - \eps) b_{\infty} - B_{\infty}\right) b_{\infty} \norm{v}{L^2(\Omega)}^2 \leq C_b \norm{f}{L^2(\Omega)} \norm{v}{L^2(\Omega)}.
	\end{align*}
	From here, we proceed as for the constant case. In particular, the estimate~\cref{eq:stability} holds for the $\eps$-independent constant $C_{stab} = \tfrac{\sqrt{2}C_b}{c_b \sqrt{b_{\infty} - B_{\infty}}\sqrt{b_{\infty}}} \left(\tfrac{c_b}{C_b} + \tfrac{2}{b_\infty(b_{\infty} - B_{\infty})}\right)^{1/2}$.
\end{proof}

\begin{remark}
	\label{rem:reaction_term}
	For the case of a convection-diffusion-reaction equation, the result from \cref{lem:a_priori} is well known, but relies on the presence of the reaction term, see \cite[Lemma 1.18]{Roos-Stynes-Tobiska}. In this case, as well as the special convection-diffusion case with $b = \begin{pmatrix}
	1 & 0
	\end{pmatrix}^\top$ also (local) estimates on the directional derivative away from boundary layers are known, see \cite[Lemma 1.2]{Eriksson-Johnson} and \cite[Remark 1.19]{Roos-Stynes-Tobiska}.
\end{remark}

%%%%%%%%%%%%%%%%%%%%%%%%%%%%%%%%%%%%%%%%%%%%%%%%%%%%%%%%%%%%%%%%%%%%%%%%%%%%%%%%
\section{An ideal multi-scale method}
\label{sec:ideal_method}
%%%%%%%%%%%%%%%%%%%%%%%%%%%%%%%%%%%%%%%%%%%%%%%%%%%%%%%%%%%%%%%%%%%%%%%%%%%%%%%%

This section introduces an ideal multi-scale method that identifies an approximation of the solution $u$ in an operator-adapted ansatz space $V_H$, whose construction is based on some (possibly coarse) FE mesh. 

Let $\mcT_H$ be a (triangular or quadrilateral) shape-regular mesh of the domain $\Omega$, where $H$ denotes the global mesh size of $\mcT_H$, namely, $H=\max_{T\in\mcT_H} \operatorname{diam}(T)$. The degrees of freedom of the multi-scale method are associated with the mesh elements $T\in\mcT_H$ via the characteristic functions $\mathbf{1}_T$. Given the solution operator  $\mcA^{-1}\colon L^2(\Omega)\rightarrow V$ that maps each right-hand side function $f\in L^2(\Omega)$ to the corresponding unique weak solution of problem~\cref{eq:pde} and the standard FE space
\[
\mcP^0(\mcT_H)\coloneqq {\rm span}\left\{\mathbf{1}_T\, |\, T\in \mcT_H\right\}
\] 
of $\mcT_H$-piecewise constants, the finite-dimensional subspace $V_H\subset V$ is given by
\begin{equation}
\label{eq:VH}
\GalSpace\coloneqq\mcA^{-1}\mcP^0(\mcT_H)={\rm span}\left\{\mcA^{-1}\mathbf{1}_T\, |\, T\in\mcT_H\right\}.
\end{equation}
Note that we could have chosen FE spaces other than $\mcP^0(\mcT_H)$ for the approximation of the right-hand side. E.g. the paper \cite{Elfverson} considers discontinuous piecewise linears on simplicial meshes and \cite{doi:10.1137/090775592} considers continuous piecewise linears with zero boundary condition. More generally, a finite-dimensional space of linear functionals on $V$ could be considered. The authors in \cite{Li-Peterseim-Schedensack} implicitly use the Dirac delta functions $\delta_z$ for the interior vertices $z$ of $\mcT_H$. Clearly this is only possible in one dimension and requires regularization in higher dimensions. While in two dimensions this was somewhat justifiable, the three-dimensional case seemed not to be tractable with this choice.

Let $\proj_H\colon L^2(\Omega)\rightarrow \mcP^0(\mcT_H)$ denote the $L^2$-orthogonal projection operator and note that, for all $T\in\mcT_H$, $\proj_H v|_T$ is given by
\[
\proj_H v|_T=\frac{1}{|T|}\int_T v\, \D x.
\]
It is well-known that $\Pi_H$ fulfills the following local stability and approximation properties (see~\cite{Payne-Weinberger,Bebendorf})
\begin{align}
\label{eq:projHa}
\norm{\proj_H v}{L^2(T)}&\leq \norm{v}{L^2(T)}\quad\text{for all }\, v\in L^2(T),\\
\label{eq:projHb}
\norm{v-\proj_H v}{L^2(T)}&\leq \pi^{-1} H \norm{\nabla v}{L^2(T)}\quad\text{for all }\, v\in H^1(T).
\end{align}
Given the kernel $\mathcal W\coloneqq\ker(\Pi_H\vert_V)$ of $\Pi_H$ when restricted to $V$, $V_H$ is equivalently characterized as 
$$V_H = \{v_H\in V\,|\,\forall w\in \mathcal W: a(v_H,w)=0\}.$$
To see this, let $v_H=\mcA^{-1}p_H\in V_H$ with $p_H\in\mcP^0(\mcT_H)$ and $w\in \mathcal W$ and observe that
$$a(v_H,w)=a(\mcA^{-1}p_H,w)=\inner{p_H}{w}{L^2(\Omega)}=0.$$
This shows one inclusion and equality of the spaces follows by a dimensionality argument. (More details are found in \cite[Remark 3.7]{Altmann-Henning-Peterseim}). 
In the pure diffusion case this is the $a$-orthogonal complement of $\mathcal W$, which led the notion of orthogonal decomposition.

The concatenation of the $L^2$-orthogonal projection $\Pi_H$ and the solution operator $\mcA^{-1}$ defines an ideal multi-scale method that maps right-hand sides $f\in L^2(\Omega)$ onto $\GalSpace$. The resulting approximation $\PiGalSol\in\GalSpace$ is the unique function that satisfies, for all $\GalTest \in\GalSpace$,
\begin{equation}
\label{eq:PiGalProt}
a(\PiGalSol,\GalTest)=\inner{\Pi_H f}{\GalTest}{L^2(\Omega)}.
\end{equation}
Note that this is a non-standard projection onto the discrete space. It equals the Galerkin projection and the abstract Petrov--Galerkin framework of \cite{Altmann-Henning-Peterseim} only for $f\in \mcP^0(\mcT_H)$. For general $f\in L^2(\Omega)$ it differs from the more established variants. In the pure diffusion case it equals the collocation variant discussed in \cite{Hauck-Peterseim}.

In the following lemma we derive an $\varepsilon$-independent upper bound on the discretisation error under \cref{ass:b}.
\begin{lemma}
	\label{lem:galerkin_ideal}
	Let $f\in H^s(\Omega)$ with $s\in[0,1]$, and $b$ as in \cref{ass:b}. Denote with $u\in V$ and $\GalSol\in \GalSpace$ the unique solutions to~\cref{eq:pde_weak} and~\cref{eq:PiGalProt}, respectively. Then, there holds
	\begin{equation}
	\label{eq:galerkin_ideal}
	\normeps{u-\GalSol} \leq C_{stab}\norm{f-\Pi_H f}{L^2(\Omega)}
	\leq C\, C_{stab}H^s\norm{f}{H^s(\Omega)},
	\end{equation}
	where $C$, $C_{stab}$ are $\varepsilon$- and $H$-independent positive constants, $C_{stab}$ being introduced in \cref{lem:a_priori}.
\end{lemma}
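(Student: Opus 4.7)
The plan is to exploit the identification $\GalSpace=\mcA^{-1}\mcP^0(\mcT_H)$ to represent $\GalSol$ explicitly as the image of $\Pi_H f$ under the solution operator, reducing the error analysis to a direct application of \cref{lem:a_priori}.

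First I would show that $\GalSol=\mcA^{-1}(\Pi_H f)$. The candidate $\tilde u\coloneqq \mcA^{-1}(\Pi_H f)$ certainly lies in $\GalSpace$ by construction of $\GalSpace$ via \cref{eq:VH}. Testing the defining identity of $\mcA^{-1}$ against any $\GalTest\in\GalSpace\subset V$ gives $a(\tilde u,\GalTest)=\inner{\Pi_H f}{\GalTest}{L^2(\Omega)}$, which is exactly \cref{eq:PiGalProt}. Well-posedness of the discrete problem, inherited from the coercivity \cref{eq:wp} on the subspace $\GalSpace\subset V$, then forces $\GalSol=\tilde u$.

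Next I would exploit the linearity of $\mcA^{-1}$. Since both $u=\mcA^{-1}f$ and $\GalSol=\mcA^{-1}(\Pi_H f)$ share the same operator, the error $u-\GalSol=\mcA^{-1}(f-\Pi_H f)$ is itself the unique weak solution of \cref{eq:pde_weak} with right-hand side $f-\Pi_H f\in L^2(\Omega)$. Under \cref{ass:b} and the assumed smallness of $\eps$ relative to $b_\infty$ and $B_\infty$ (which we inherit as standing hypotheses of this section), \cref{lem:a_priori} applies directly to this residual problem, yielding the first inequality
\begin{equation*}
\normeps{u-\GalSol}\leq C_{stab}\norm{f-\Pi_H f}{L^2(\Omega)}.
\end{equation*}

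For the second inequality I would invoke the local approximation properties \cref{eq:projHa}--\cref{eq:projHb} of $\Pi_H$. Summing \cref{eq:projHb} over $T\in\mcT_H$ gives $\norm{f-\Pi_H f}{L^2(\Omega)}\leq \pi^{-1}H\norm{\nabla f}{L^2(\Omega)}$, handling the case $s=1$; the stability bound $\norm{f-\Pi_H f}{L^2(\Omega)}\leq \norm{f}{L^2(\Omega)}$ from \cref{eq:projHa} plus the triangle inequality handles $s=0$. The intermediate range $s\in(0,1)$ then follows by standard real interpolation between $L^2(\Omega)$ and $H^1(\Omega)$, identifying $H^s(\Omega)$ with the corresponding interpolation space. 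No step here requires the fine structure of $\mcA^{-1}$; the whole argument is essentially a one-line consequence of the identification of $\GalSol$ with the image of $\Pi_H f$.

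The only delicate point is the first step, since the non-standard right-hand side $\Pi_H f$ in \cref{eq:PiGalProt} could a priori break the natural correspondence between test functions in $V$ and test functions in $\GalSpace$; once the identity $\GalSol=\mcA^{-1}(\Pi_H f)$ is established, the rest reduces to the a-priori estimate and interpolation of projection errors, both of which are routine.
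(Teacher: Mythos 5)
Your proposal is correct and follows essentially the same route as the paper: identify $\GalSol=\mcA^{-1}(\Pi_H f)$ (which the paper takes as immediate from \cref{eq:VH} and \cref{eq:PiGalProt}, while you justify it via coercivity on the subspace), use linearity of $\mcA^{-1}$ together with \cref{lem:a_priori} for the first inequality, and the approximation properties of $\Pi_H$ for the second. Your added detail on the well-posedness of the discrete problem and on the interpolation argument for $s\in(0,1)$ only makes explicit what the paper leaves implicit.
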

\begin{proof}
	Since $u=\mcA^{-1}f$ and $\PiGalSol=\mcA^{-1}\Pi_H f$ we readily get
	\begin{align*}
	\normeps{u-\GalSol}
	&=\normeps{\mcA^{-1}f-\mcA^{-1}\Pi_H f}
	=\normeps{\mcA^{-1}(f-\Pi_H f)}.
	\end{align*}
	\Cref{lem:a_priori} provides an upper bound of the right-hand side. Altogether,  
	\[
	\normeps{u-\GalSol}
	\leq C_{stab}\norm{f-\Pi_H f}{L^2(\Omega)}
	\leq C\,C_{stab}H^s\norm{f}{H^s(\Omega)}, 
	\]
	where the last inequality holds for all right-hand sides $f\in H^s(\Omega)$ with $s\in[0,1]$.
\end{proof}

Apart the exactness of the ideal method for $f\in\mcP^0(\mcT_H)$, \cref{lem:galerkin_ideal} above contains an error bound in the weaker $L^2(\Omega)$-norm that is independent of $\varepsilon$. First order convergence is predicted without a preasymptotic regime. The numerical experiments of the later sections will rather report second order and even $\varepsilon$-independent first order for the $H^1(\Omega)$-seminorm. A more abstract version of the estimate of \cref{eq:galerkin_ideal} reads
\begin{equation*}
\norm{u-\GalSol}{Y} \leq \|\mcA^{-1}\|_{X\rightarrow Y}\norm{f-\Pi_H f}{X},
\end{equation*}
where $\|\mcA^{-1}\|_{X\rightarrow Y}$ refers to the norm of 
$\mcA^{-1}$ as a mapping between suitable Banach spaces $X$ and $Y$. Choosing  $X=H^{-1}(\Omega)$ and $Y=L^2(\Omega)$ or $Y=H^1(\Omega)$  or $Y=H^1(\omega)$ where $\omega\subset\Omega$ excludes the boundary layers would pave the way to proving the numerically observed rates. However, we are not aware of any $\varepsilon$-independent bounds of the required operator norms.

%%%%%%%%%%%%%%%%%%%%%%%%%%%%%%%%%%%%%%%%%%%%%%%%%%%%%%%%%%%%%%%%%%%%%%%%%%%%%%%%
\section{Super-localization strategy}
\label{sec:super_localization}
%%%%%%%%%%%%%%%%%%%%%%%%%%%%%%%%%%%%%%%%%%%%%%%%%%%%%%%%%%%%%%%%%%%%%%%%%%%%%%%%

The canonical basis functions $\{\mcA^{-1}\mathbf{1}_T\,|\, T\in\mcT_H\}$ of the operator-adapted approximation space $\GalSpace$ are non-local. To make the method practically feasible, localized basis functions have to be identified. The LOD provides a mechanism to construct an exponentially decaying basis that has been very successful in many applications. However, this is not the case when applied to convection-dominated problems, as we are interested here.
More precisely, when applying the abstract theory of \cite{Altmann-Henning-Peterseim} the exponential decay property deteriorates as $\varepsilon$ goes to $0$, and the error estimate of error committed by computing a localized approximation of the exponentially decaying basis is only shown to behave like $\varepsilon^{-1}H^{-1-d/2}\exp(-c\varepsilon \ell)$. This indicates that the localization parameter needs to grow algebraically in $\varepsilon^{-1}$ to make this quantity small. This is in line with practical experience, documented e.g. in \cite{Li-Peterseim-Schedensack}. Therein, the authors also discuss a possible improvement using anisotropic patches. However, the construction is based on point evaluation functionals and, hence, essentially limited to the one- and two-dimensional case. 

This section presents an advanced localization strategy, which has superior localization properties, yielding, in particular, super-exponential decay of the localization error. The main idea stays in the identification of local $\mcT_H$-piecewise constant source terms that yield rapidly decaying (or even local) responses under the solution operator $\mcA^{-1}$ of the convection-dominated problem~\cref{eq:pde}. This super-localization strategy, now known as the Super-Localized Orthogonal Decomposition (SLOD), has been first introduced in~\cite{Hauck-Peterseim} for the second order elliptic partial differential equation $-\Div(A\nabla u)=f$, and subsequently extended to indefinite non-hermitian problems in~\cite{Freese-Hauck-Peterseim}. 

For the subsequent derivation of the super-localization strategy, we need to introduce some notations. 
The local patch of level $\ell\in\N$ of a union of elements $S\subset \Omega$ is given by:
\begin{equation*}
N^\ell(S)\coloneqq \begin{cases}
\bigcup\{T\in\mcT_H\,|\, T\cap S\neq \emptyset\} & \ell=1\\
N^1(N^{\ell-1}(S)) & \ell=2,3,4,\ldots
\end{cases}
\end{equation*}
Let $\ell\in\N$ be fixed, such that no patch coincide with the entire domain $\Omega$. Given $T\in\mcT_H$, denote 
\begin{itemize}
	\item 
	$\omega\coloneqq N^\ell(T)$ its $\ell$-th order patch;
	\item
	$V_\omega\coloneqq \left\{ v|_\omega\hspace{1ex}\vert\, v\in V \right\}$ the restriction of $V$ to the patch $\omega$, equipped with the semi-norm $\seminorm{\bullet}{H^1(\omega)}$ and the norm $\norm{\bullet}{H^1(\omega)}$;
	\item
	$\mcT_{H,\omega}\coloneqq \{K\in\mcT_H\cap\omega\}$ the sub-mesh of $\mcT_H$ with elements in $\omega$;
	\item 
	$\proj_{H,\omega}\colon L^2(\Omega)\rightarrow\mcP^0(\mcT_{H,\omega})$ the $L^2$-orthogonal projection onto $\mcP^0(\mcT_{H,\omega})$.
\end{itemize}
Note that throughout the paper, we will not distinguish between functions in $H^1_0(\omega)$ and their $V$-conforming extension by $0$ to the full domain $\Omega$.

The (ideal) basis function $\varphi=\varphi_{T,\ell,\varepsilon}\in V_H$ associated with the element $T$ is given by the ansatz
\[
\varphi=\mcA^{-1} g\quad{\rm with}\quad g=g_{T,\ell,\varepsilon}\coloneqq \sum_{K\in \mcT_{H,\omega}} c_K\mathbf{1}_K,
\]
for some coefficients $(c_K)_{K\in\mcT_{H,\omega}}$ that will be determined afterwards.
In particular, $\varphi$ fulfills, for all $v\in V$,
\[
a(\varphi,v)=\inner{g}{v}{L^2(\omega)}.
\] 
The Galerkin projection of $\varphi$ onto the local subspace $H^1_0(\omega)$ is the function $\varphi^{\rm loc}=\varphi^{\rm loc}_{T,\ell,\varepsilon}\in H^1_0(\omega)$ satisfying, for all $v\in H^1_0(\omega)$,
\begin{equation}
\label{eq:phi_loc}
a_\omega(\varphi^{\rm loc},v)=\inner{g}{v}{L^2(\omega)},
\end{equation}
where $a_\omega(\cdot,\cdot)$ denotes the restriction of the bilinear form $a(\cdot,\cdot)$ to the subset $\omega$.
In general, the local function $\varphi^{\rm loc}$ is a poor approximation of the ideal function $\varphi$. Nevertheless, appropriate nontrivial choices of $g$ (i.e., of coefficients $(c_K)_{K\in\mcT_{H,\omega}}$) lead to highly accurate approximations in the energy norm.

Before stating the criterion for the choice of $g$, we need to recall a few results on traces of $V_\omega$-functions (see~\cite{Lions-Magenes} for more details).
Let $\gamma_0$ denote the trace operator on $\omega$ restricted to $V_\omega\subset V$ 
\begin{equation}
\label{eq:trace}
\gamma_0=\gamma_{0,\omega}\colon V_\omega\rightarrow H^{1/2}(\partial\omega),
\end{equation}
and let $X\coloneqq H^{1/2}(\partial\omega)$ denote its range. 
We define the normal derivative $\gamma_{\partial_n}u\in X^\prime$ of $u\in H^1(\omega)$ with $-\varepsilon\Delta u+b\cdot\nabla u\in L^2(\omega)$ as
\begin{align}
\dual{\gamma_{\partial_n}u}{v}{X^\prime\times X}
\coloneqq \frac{1}{\varepsilon}\left(-\inner{g}{v}{L^2(\Omega)} + a_\omega(\varphi^{\rm loc},v)\right). \label{def:normal_derivative}
\end{align}

Using the normal derivative, we may now characterize the localization error.

\begin{lemma}
	\label{lem:SLOD_loc_error}
	There holds:
	\begin{equation}
	\label{eq:loc_error}
	a(\varphi-\varphi^{\rm loc},v) 
	= -\eps \dual{\gamma_{\partial_n}\varphi^{\rm loc}}{\gamma_0v}{X^\prime\times X}
	\quad\text{for all }\, v\in V,
	\end{equation} 
	where $\gamma_{\partial_n}\varphi^{\rm loc}$ denotes the normal derivative of $\varphi^{\rm loc}$ as defined in \cref{def:normal_derivative}.
\end{lemma}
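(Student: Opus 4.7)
The plan is to expand $a(\varphi-\varphi^{\rm loc},v)$ term by term using the defining equations of $\varphi$ and $\varphi^{\rm loc}$, and then recognize what is left as the duality pairing in \cref{def:normal_derivative}.

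First I would split
\[
a(\varphi-\varphi^{\rm loc},v) = a(\varphi,v)-a(\varphi^{\rm loc},v).
\]
Since $\varphi=\mcA^{-1}g$, testing against any $v\in V$ yields $a(\varphi,v)=\inner{g}{v}{L^2(\Omega)}$. For the second summand, I would use that $\varphi^{\rm loc}\in H^1_0(\omega)$, extended by zero to $\Omega$, is a legitimate element of $V$; since $\nabla\varphi^{\rm loc}$ and $\varphi^{\rm loc}$ both vanish outside $\omega$, the full-domain bilinear form collapses to the local one,
\[
a(\varphi^{\rm loc},v) = a_\omega(\varphi^{\rm loc},v),
\]
where on the right-hand side $v$ is interpreted as its restriction to $\omega$. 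Substituting these two identities and comparing with \cref{def:normal_derivative} gives exactly
\[
a(\varphi-\varphi^{\rm loc},v) = \inner{g}{v}{L^2(\Omega)}-a_\omega(\varphi^{\rm loc},v) = -\eps\dual{\gamma_{\partial_n}\varphi^{\rm loc}}{\gamma_0 v}{X'\times X}.
\]

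The only subtle point—and in my view the main thing to justify carefully—is that \cref{def:normal_derivative} really does produce a well-defined element of $X'=H^{-1/2}(\partial\omega)$, so that the pairing with $\gamma_0 v$ makes sense for arbitrary $v\in V$. Concretely, one has to check that the expression in parentheses on the right of \cref{def:normal_derivative} depends on $v$ only through its trace $\gamma_0 v$ and is continuous with respect to it. Dependence only on the trace follows from the local equation \cref{eq:phi_loc}: if $v\in H^1_0(\omega)=\ker\gamma_0$, then $a_\omega(\varphi^{\rm loc},v)=\inner{g}{v}{L^2(\omega)}=\inner{g}{v}{L^2(\Omega)}$, so the bracket vanishes. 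Continuity with respect to the $H^{1/2}(\partial\omega)$-norm is then standard Lions--Magenes theory, using the continuous right inverse of $\gamma_0$ to lift $\gamma_0 v$ into $V_\omega$ and bounding via Cauchy--Schwarz together with $-\eps\Delta\varphi^{\rm loc}+b\cdot\nabla\varphi^{\rm loc}=g\in L^2(\omega)$. Once this well-definedness is in place, the identity \cref{eq:loc_error} is essentially a tautology extracted from the definitions.
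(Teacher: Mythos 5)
Your proposal is correct and follows essentially the same route as the paper: split $a(\varphi-\varphi^{\rm loc},v)$ into $a(\varphi,v)-a(\varphi^{\rm loc},v)$, use the defining equation of $\varphi$ and the support of $\varphi^{\rm loc}$ to reduce to $\inner{g}{v}{L^2(\omega)}-a_\omega(\varphi^{\rm loc},v)$, and then invoke \cref{def:normal_derivative}. The additional discussion of well-definedness of $\gamma_{\partial_n}\varphi^{\rm loc}$ as an element of $X'$ is a sensible supplement that the paper handles separately (via the $H^2$-regularity remark after the lemma) rather than inside the proof.
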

\begin{proof}
	Let $v\in V$. Then, there holds:
	\begin{align*}
	a(\varphi-\varphi^{\rm loc},v) 
	& = a(\varphi,v) - a(\varphi^{\rm loc},v)
	= \inner{g}{v}{L^2(\omega)} - a_\omega(\varphi^{\rm loc},v)\\
	& = - \varepsilon \dual{\gamma_{\partial_n}\varphi^{\rm loc}}{\gamma_0v}{X^\prime\times X}.
	\end{align*} 
\end{proof}

\begin{remark}
	In the previous works \cite{Hauck-Peterseim,Freese-Hauck-Peterseim}, the smallness of the normal derivative has been interpreted as the (almost) $L^2$-orthogonality of $g$ on the space of convection-harmonic functions. Here, however, we directly use the smallness of the normal derivative, which makes the algorithm even simpler and avoids the sampling of the respective space of convection-harmonic functions.
\end{remark}

From \cite[Theorem 31.30]{Ern-Guermond-2021-II}, under the assumption $\omega$ convex, we find that the local function $\varphi^{\rm loc}$ is in the space $H^2(\omega)$. Hence, from \cite[Example 4.16, Theorem 3.16]{Ern-Guermond-2021-I} we get that the normal derivative $\gamma_{\partial_n} \varphi^{\rm loc}$ is integrable, as $H^{1/2}(\partial\omega)$ is continuously embedded in $L^2(\partial \omega)$. Thus, since $v\in H^1(\omega)$, we may identify the dual pairing $\dual{\gamma_{\partial_n} \varphi^{\rm loc}}{\gamma_0 v}{X^\prime\times X}$ with the $L^2(\partial\omega)$-inner product $\int_{\partial \omega} \gamma_{\partial_n} \varphi^{\rm loc} \gamma_0 v \D s$.
The characterization in equation \cref{eq:loc_error} now yields the following estimate for the localization error:
\begin{align*}
a(\varphi-\varphi^{\rm loc},v) &= -\eps \int\limits_{\partial \omega} \gamma_{\partial_n} \varphi^{\rm loc} \gamma_0 v \D s \leq \eps \norm{\gamma_{\partial_n} \varphi^{\rm loc}}{L^2(\partial \omega)} \norm{\gamma_0 v}{L^2(\partial \omega)} \\
&\leq \eps \norm{\gamma_{\partial_n} \varphi^{\rm loc}}{L^2(\partial \omega)} C_{\gamma_0} \norm{v}{H^1(\omega)},
\end{align*}
where we used the boundedness of $\gamma_0\colon V_\omega\to L^2(\partial \omega)$ with constant $C_{\gamma_0}$. 

We conjecture the super-exponential decay in $\ell$ of the $L^2(\partial\omega)$-norm of the normal derivative, i.e.,  $\norm{\gamma_{\partial_n} \varphi^{\rm loc}}{L^2(\partial \omega)}$. This is justified by the numerical experiment shown in \cref{fig:decay_eigenvalue}, which displays the eigenvalues of the matrix given below in \cref{eq:normal_derivative_eigenvalue}.
\begin{figure}
	\input{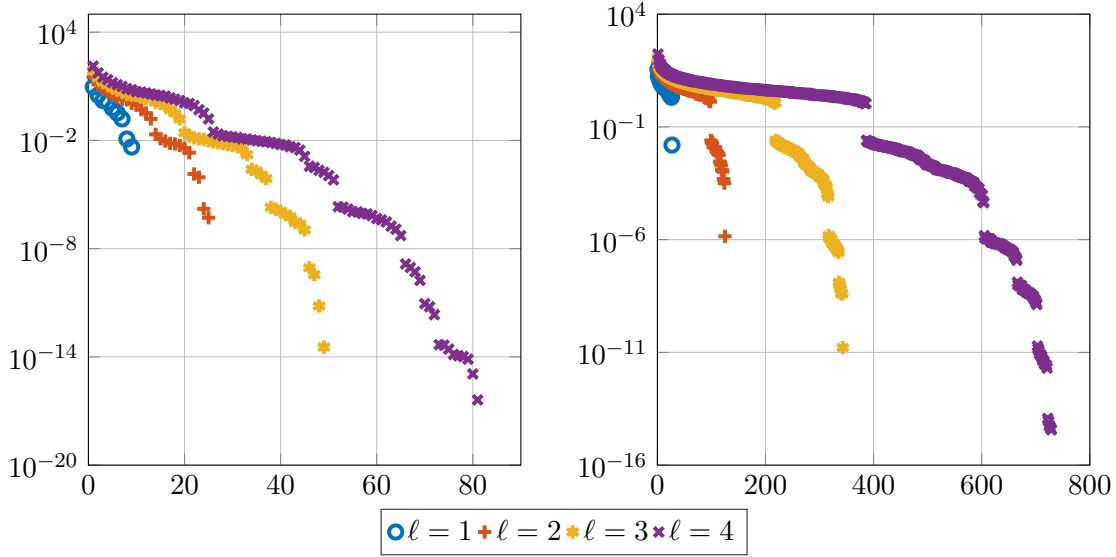}
	\caption{Decay of eigenvalues and respective normal derivative for a patch that does not reach the boundary. The velocity field $b$ is given as all ones. (a) Two-dimensional result for $\eps = 2^{-7}$ on a coarse mesh $H = 2^{-6}$. (b) Three-dimensional result for $\eps = 2^{-5}$ on a coarse mesh with $H=2^{-4}$.}
	\label{fig:decay_eigenvalue}
\end{figure}

\begin{conjecture}[Super-exponential decay]\label{con:sup_exp_decay}
	The quantity $\norm{\gamma_{\partial_n} \varphi^{\rm loc}}{L^2(\partial \omega)}$ decays super-ex\-ponentially in $\ell$, i.e., there exist constants $C_{sd}(\varepsilon,H,\ell)>0$ depending on $\varepsilon,\, H$ and $\ell$, but being independent of $T$, and $C>0$ independent of $\varepsilon,\, H,\, \ell$ and $T$ such that
	\begin{equation}
	\label{eq:conjecture}
	\norm{\gamma_{\partial_n} \varphi^{\rm loc}}{L^2(\partial \omega)} \leq C_{sd}(\varepsilon,H,\ell) \exp\left(-C\ell^{\frac{d}{d-1}}\right).
	\end{equation}
\end{conjecture}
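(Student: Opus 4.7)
The plan is to mirror the strategy used for the pure diffusion case in \cite{Hauck-Peterseim} and the non-hermitian indefinite case in \cite{Freese-Hauck-Peterseim}: reformulate \cref{eq:conjecture} as a smallest-singular-value estimate for a finite-dimensional source-to-normal-trace operator, and then exploit the dimensional gap between the $d$-dimensional source space $\mcP^0(\mcT_{H,\omega})$ and the $(d-1)$-dimensional trace manifold $\partial\omega$.

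\textbf{Step 1 (Reformulation).} For a fixed element $T\in\mcT_H$, introduce the linear map
\[
\Lambda_\ell\colon \mcP^0(\mcT_{H,\omega})\to L^2(\partial\omega),\qquad g\mapsto \gamma_{\partial_n}\varphi^{\rm loc},
\]
with $\varphi^{\rm loc}$ the solution of \cref{eq:phi_loc}. Once an admissibility normalization on $g$ is fixed (as it will be for the actual SLOD construction in the following section), the quantity in \cref{eq:conjecture} equals the minimum of $\norm{\Lambda_\ell g}{L^2(\partial\omega)}$ over the resulting affine subspace, and is controlled by the smallest singular value of $\Lambda_\ell$ restricted to that subspace, up to prefactors that can be absorbed into $C_{sd}(\varepsilon,H,\ell)$. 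This is precisely the object being plotted in \cref{fig:decay_eigenvalue}.

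\textbf{Step 2 (Dimensional gap).} The source space has dimension $\dim\mcP^0(\mcT_{H,\omega})\sim\ell^d$, whereas reasonable spectral or polynomial spaces $W_k\subset L^2(\partial\omega)$ on the $(d-1)$-dimensional boundary have dimension of order $k^{d-1}$. Suppose one can show that every normalized image $\Lambda_\ell g$ lies within $L^2(\partial\omega)$-distance $\lesssim \exp(-ck)$ of such a $W_k$. Choosing $k\sim \ell^{d/(d-1)}$ matches the two dimensions up to a constant factor, so that a rank/nullity argument on $\Lambda_\ell$ followed by projection away from $W_k$ produces a normalized $g$ with $\norm{\Lambda_\ell g}{L^2(\partial\omega)}\lesssim \exp(-c\ell^{d/(d-1)})$, which is exactly \cref{eq:conjecture}. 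The exponent $d/(d-1)$ arises here purely from matching a $d$-dimensional source family against traces on a $(d-1)$-dimensional manifold.

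\textbf{Step 3 and main obstacle.} The delicate ingredient is the trace approximation estimate driving Step~2: one must show that interior solutions $\varphi^{\rm loc}$ of \cref{eq:phi_loc} produce normal traces that are analytically (or at least Gevrey-) regular in the tangential direction, with constants that do not blow up the rate $C$ as $\varepsilon\to 0$. The natural tool is an annular Caccioppoli inequality for $\varphi^{\rm loc}$ on nested patches $N^{\ell-1}(T)\subset N^\ell(T)$, combined with a Bernstein-type transfer from interior regularity to tangential trace approximation rates, as in the scheme of \cite{Hauck-Peterseim}. For the convection-dominated operator, standard energy-type estimates produce $\varepsilon^{-1}$-factors from the drift that can be absorbed into the $\varepsilon$-dependent prefactor $C_{sd}$; keeping the exponential rate $C$ itself $\varepsilon$-independent, however, will almost certainly require performing all boundary regularity estimates on the exponentially weighted variable $v=\exp(-b\cdot x)\varphi^{\rm loc}$ used in the proof of \cref{lem:a_priori}, whose energy balance is $\varepsilon$-robust. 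This is the step I expect to be the main obstacle, and it is presumably the reason no rigorous proof has been recorded so far even in the simpler pure diffusion setting \cite{Hauck-Peterseim}; the numerical evidence in \cref{fig:decay_eigenvalue} therefore remains the principal support for the conjecture.
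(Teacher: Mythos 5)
The statement you are asked to prove is stated in the paper as a \emph{conjecture}, and the paper offers no proof of it: the only support given is the numerically observed decay of the eigenvalues of the matrix \cref{eq:normal_derivative_eigenvalue} in \cref{fig:decay_eigenvalue}, together with the remark that the one-dimensional case ($\tfrac{d}{d-1}$ read as infinity) is trivially exact. So there is no paper argument to compare yours against, and your proposal should be judged on its own terms as a proof sketch. Read that way, it is honest and essentially on target: the dimensional-gap heuristic in your Step~2 --- a source space of dimension $\sim\ell^d$ played off against trace spaces of dimension $\sim k^{d-1}$ on $\partial\omega$, with $k\sim\ell^{d/(d-1)}$ balancing the two --- is exactly the mechanism that produces the exponent $\tfrac{d}{d-1}$ in the pure diffusion analysis of \cite{Hauck-Peterseim}, and your Step~1 correctly identifies the eigenvalue problem \cref{eq:normal_derivative_eigenvalue} as the discrete realization of the smallest-singular-value formulation.

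The gap you flag in Step~3 is real and is the reason the statement remains a conjecture: one needs an $\varepsilon$-robust interior-to-trace regularity estimate (a Caccioppoli-type inequality plus tangential approximation rates for normal traces of local solutions of \cref{eq:phi_loc}) whose exponential rate does not degenerate as $\varepsilon\to 0$. Your suggestion to work with the exponentially weighted variable $v=\exp(-b\cdot x)\varphi^{\rm loc}$ from the proof of \cref{lem:a_priori} is a sensible idea for keeping the energy balance $\varepsilon$-uniform, but note a caveat: the weight $\exp(-b\cdot x)$ is only under control on patches of diameter $\sim\ell H$ through the constants $c_b,C_b$, which themselves enter the prefactor, and for non-constant $b$ the transformation no longer removes the drift exactly. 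Since even in the pure diffusion setting the rigorous argument requires assumptions beyond what is proved, you should present your text as a heuristic justification of the exponent and of the numerics in \cref{fig:decay_eigenvalue}, not as a proof; as written, your own concluding sentence already does this correctly.
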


\begin{remark}[SLOD basis in 1d]
	In the one-dimensional case, the boundary of the patches consists only of the two end points of the respective intervals, whereas we have three degrees of freedom for an order $\ell=1$ patch. Thus, the problem can be solved exactly, which yields a vanishing normal derivative on both end points of the patches. Hence, the \cref{con:sup_exp_decay} for $d=1$, interpreting $\frac{d}{d-1}$ as infinity, reveals a truly local basis function. This effect is also observed in \cref{fig:1D_basis}, where we compares three different basis functions in $V_{H}$ for various values of $\varepsilon$ and corresponding to the same mesh element $T\in\mcT_H$, namely $\mcA^{-1}\mathbf{1}_T$ (left); the basis function for $L^2$-projection based LOD (center); the SLOD basis function $\varphi^{\rm loc}_{T,1,\varepsilon}$ (right).
\end{remark}

\begin{figure}
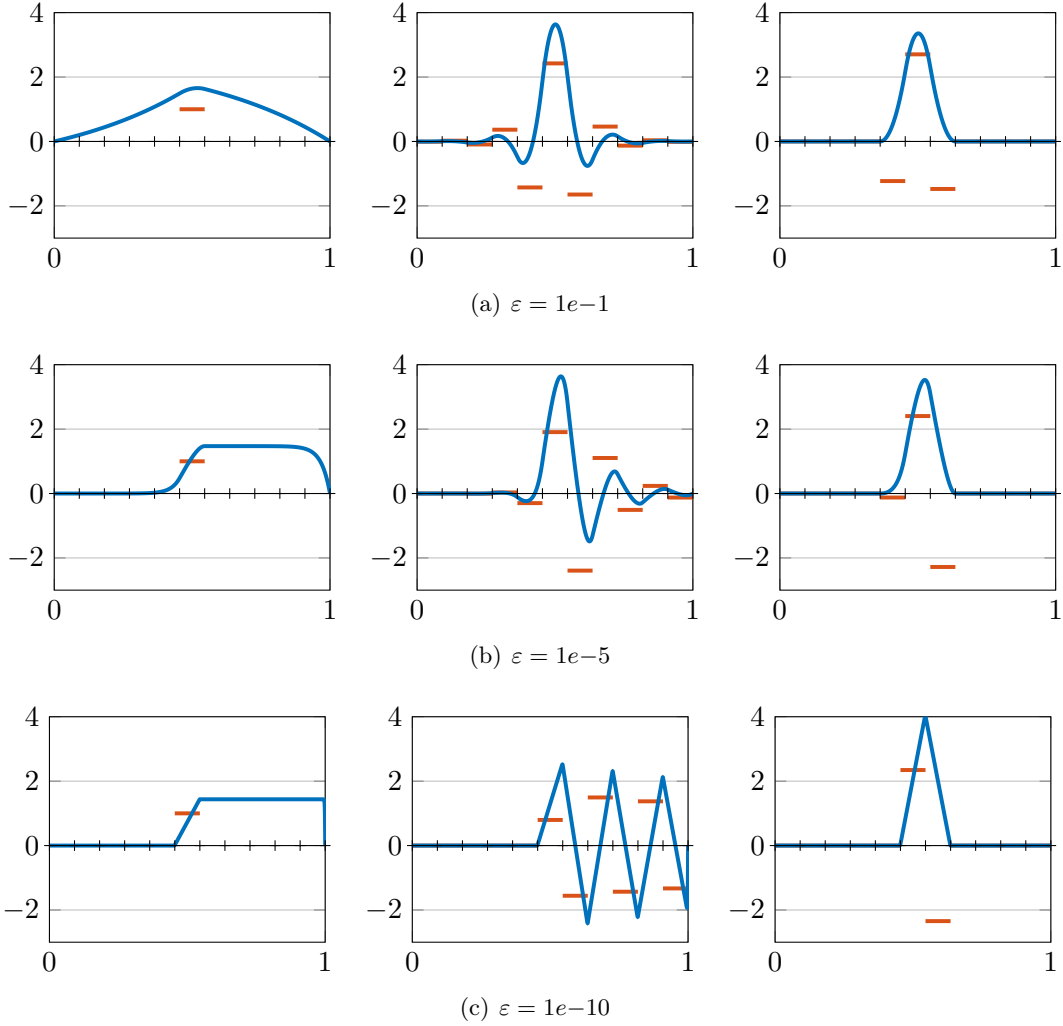

	\centering     
	\subfigure[$\varepsilon=1e{-1}$]{
		\input{basis_1D_ref_12_eps_1_l_4.tex}
		\label{fig:a}
	}
	\subfigure[$\varepsilon=1e{-5}$]{
		\input{basis_1D_ref_12_eps_5_l_4.tex}
		\label{fig:b}
	}
	\subfigure[$\varepsilon=1e{-10}$]{
		% This file was created by matlab2tikz.
%
%The latest updates can be retrieved from
%  http://www.mathworks.com/matlabcentral/fileexchange/22022-matlab2tikz-matlab2tikz
%where you can also make suggestions and rate matlab2tikz.
%
\definecolor{mycolor1}{rgb}{0.85098,0.32549,0.09804}%
\definecolor{mycolor2}{rgb}{0.00000,0.44706,0.74118}%
\begin{tikzpicture}

\begin{axis}[%
width=3.666cm,
height=3cm,
at={(4.824cm,0cm)},
scale only axis,
xmin=0,
xmax=1,
xtick={0, 1},
ymin=-3,
ymax=4,
axis background/.style={fill=white},
xmajorgrids,
ymajorgrids
]
\addplot [color=mycolor1, line width=1.5pt, forget plot]
  table[row sep=crcr]{%
0	-3.88312437760163e-11\\
0.0909090909090909	-3.88312437760163e-11\\
};
\addplot [color=mycolor1, line width=1.5pt, forget plot]
  table[row sep=crcr]{%
0.0909090909090909	4.00950661560984e-11\\
0.181818181818182	4.00950661560984e-11\\
};
\addplot [color=mycolor1, line width=1.5pt, forget plot]
  table[row sep=crcr]{%
0.181818181818182	-5.10141373588624e-11\\
0.272727272727273	-5.10141373588624e-11\\
};
\addplot [color=mycolor1, line width=1.5pt, forget plot]
  table[row sep=crcr]{%
0.272727272727273	4.07114494893612e-08\\
0.363636363636364	4.07114494893612e-08\\
};
\addplot [color=mycolor1, line width=1.5pt, forget plot]
  table[row sep=crcr]{%
0.363636363636364	-0.000179877571140852\\
0.454545454545455	-0.000179877571140852\\
};
\addplot [color=mycolor1, line width=1.5pt, forget plot]
  table[row sep=crcr]{%
0.454545454545455	0.795614626180927\\
0.545454545454545	0.795614626180927\\
};
\addplot [color=mycolor1, line width=1.5pt, forget plot]
  table[row sep=crcr]{%
0.545454545454545	-1.55809916221376\\
0.636363636363636	-1.55809916221376\\
};
\addplot [color=mycolor1, line width=1.5pt, forget plot]
  table[row sep=crcr]{%
0.636363636363636	1.49390840898922\\
0.727272727272727	1.49390840898922\\
};
\addplot [color=mycolor1, line width=1.5pt, forget plot]
  table[row sep=crcr]{%
0.727272727272727	-1.43236219468693\\
0.818181818181818	-1.43236219468693\\
};
\addplot [color=mycolor1, line width=1.5pt, forget plot]
  table[row sep=crcr]{%
0.818181818181818	1.3733546341623\\
0.909090909090909	1.3733546341623\\
};
\addplot [color=mycolor1, line width=1.5pt, forget plot]
  table[row sep=crcr]{%
0.909090909090909	-1.3303373980189\\
1	-1.3303373980189\\
};
\addplot [color=mycolor2, line width=1.5pt, forget plot]
  table[row sep=crcr]{%
0	0\\
0.407293146306818	-0.000282726178803205\\
0.450062144886364	-0.000276091600018624\\
0.451282848011364	0.000407733007441902\\
0.452126242897728	0.00173204295492368\\
0.452858664772728	0.00429542024428464\\
0.453546697443182	0.00926985235082167\\
0.454212535511364	0.0188900818739461\\
0.455100319602273	0.0423508457521735\\
0.543168501420455	2.50316218533266\\
0.544145063920455	2.51714461491271\\
0.544411399147728	2.51796773830736\\
0.544699928977273	2.51648994423292\\
0.545077237215909	2.50943044130582\\
0.545587713068182	2.4867430713425\\
0.634477095170455	-2.37753687929378\\
0.635453657670455	-2.40500310641368\\
0.635697798295455	-2.4066588841798\\
0.635919744318182	-2.40529215004109\\
0.63623046875	-2.39737895840864\\
0.637184836647728	-2.34808108690644\\
0.725386186079545	2.2795868342706\\
0.726362748579545	2.30592150451737\\
0.726606889204545	2.30750906735296\\
0.726828835227273	2.30619864008555\\
0.727139559659091	2.29861145705046\\
0.728093927556818	2.25134456505487\\
0.816295276988636	-2.18567213367898\\
0.817271839488636	-2.21092183882674\\
0.817515980113636	-2.21244398526354\\
0.817737926136364	-2.21118753152217\\
0.818048650568182	-2.20391290106455\\
0.819003018465909	-2.15859321390332\\
0.907204367897728	2.09556865929916\\
0.908180930397728	2.11966203820456\\
0.908425071022728	2.12106929290727\\
0.908647017045455	2.11980439905739\\
0.908957741477273	2.11271851705042\\
0.909912109375	2.06881404430453\\
0.995028409090909	-1.90730815161338\\
0.996004971590909	-1.93031601351928\\
0.996249112215909	-1.93153052926977\\
0.996493252840909	-1.92983981685293\\
0.996826171875	-1.92132405142401\\
0.997292258522728	-1.89209719490812\\
0.997847123579545	-1.81514922319255\\
0.998446377840909	-1.64201456316566\\
0.999067826704545	-1.28081773065373\\
0.999711470170455	-0.53739525276899\\
1	0\\
};
\addplot [color=black, mark=+, mark options={solid, black}, forget plot]
  table[row sep=crcr]{%
0	0\\
0.0909090909090908	0\\
0.181818181818182	0\\
0.272727272727273	0\\
0.363636363636364	0\\
0.454545454545455	0\\
0.545454545454545	0\\
0.636363636363636	0\\
0.727272727272727	0\\
0.818181818181818	0\\
0.909090909090909	0\\
1	0\\
};
\end{axis}

\begin{axis}[%
width=3.666cm,
height=3cm,
at={(9.647cm,0cm)},
scale only axis,
unbounded coords=jump,
xmin=0,
xmax=1,
xtick={0, 1},
ymin=-3,
ymax=4,
axis background/.style={fill=white},
xmajorgrids,
ymajorgrids
]
\addplot [color=mycolor1, line width=1.5pt, forget plot]
  table[row sep=crcr]{%
0	0\\
0.0909090909090909	0\\
};
\addplot [color=mycolor1, line width=1.5pt, forget plot]
  table[row sep=crcr]{%
0.0909090909090909	0\\
0.181818181818182	0\\
};
\addplot [color=mycolor1, line width=1.5pt, forget plot]
  table[row sep=crcr]{%
0.181818181818182	0\\
0.272727272727273	0\\
};
\addplot [color=mycolor1, line width=1.5pt, forget plot]
  table[row sep=crcr]{%
0.272727272727273	0\\
0.363636363636364	0\\
};
\addplot [color=mycolor1, line width=1.5pt, forget plot]
  table[row sep=crcr]{%
0.363636363636364	-1.54026791321371e-12\\
0.454545454545455	-1.54026791321371e-12\\
};
\addplot [color=mycolor1, line width=1.5pt, forget plot]
  table[row sep=crcr]{%
0.454545454545455	2.34521117050717\\
0.545454545454545	2.34521117050717\\
};
\addplot [color=mycolor1, line width=1.5pt, forget plot]
  table[row sep=crcr]{%
0.545454545454545	-2.34520458931164\\
0.636363636363636	-2.34520458931164\\
};
\addplot [color=mycolor1, line width=1.5pt, forget plot]
  table[row sep=crcr]{%
0.636363636363636	0\\
0.727272727272727	0\\
};
\addplot [color=mycolor1, line width=1.5pt, forget plot]
  table[row sep=crcr]{%
0.727272727272727	0\\
0.818181818181818	0\\
};
\addplot [color=mycolor1, line width=1.5pt, forget plot]
  table[row sep=crcr]{%
0.818181818181818	0\\
0.909090909090909	0\\
};
\addplot [color=mycolor1, line width=1.5pt, forget plot]
  table[row sep=crcr]{%
0.909090909090909	0\\
1	0\\
};
\addplot [color=mycolor2, line width=1.5pt, forget plot]
  table[row sep=crcr]{%
0	0\\
0.449618252840909	0.000280931845535903\\
0.450838955965909	0.00098060125885624\\
0.451682350852272	0.00232584677362269\\
0.452414772727272	0.00492397663203725\\
0.453102805397728	0.00996114720564378\\
0.453768643465909	0.0196984465776691\\
0.454434481534091	0.0389542278151263\\
0.457120028409091	0.158699542996105\\
0.543301669034091	4.0005124396671\\
nan	nan\\
0.545876242897728	4.00022053102404\\
0.635142933238637	0.0234217392576186\\
0.636119495738637	0.0012590871927225\\
0.636541193181818	0\\
1	0\\
};
\addplot [color=black, mark=+, mark options={solid, black}, forget plot]
  table[row sep=crcr]{%
0	0\\
0.0909090909090908	0\\
0.181818181818182	0\\
0.272727272727273	0\\
0.363636363636364	0\\
0.454545454545455	0\\
0.545454545454545	0\\
0.636363636363636	0\\
0.727272727272727	0\\
0.818181818181818	0\\
0.909090909090909	0\\
1	0\\
};
\end{axis}

\begin{axis}[%
width=3.666cm,
height=3cm,
at={(0cm,0cm)},
scale only axis,
xmin=0,
xmax=1,
xtick={0, 1},
ymin=-3,
ymax=4,
axis background/.style={fill=white},
xmajorgrids,
ymajorgrids
]
\addplot [color=mycolor1, line width=1.5pt, forget plot]
  table[row sep=crcr]{%
0.454545454545455	1\\
0.545454545454545	1\\
};
\addplot [color=mycolor2, line width=1.5pt, forget plot]
  table[row sep=crcr]{%
0	0\\
0.450639204545455	0.000282657449923285\\
0.451859907670455	0.000986624527827162\\
0.452703302556818	0.00234013310778347\\
0.453435724431818	0.00495422177529359\\
0.454123757102273	0.0100223327744728\\
0.454900568181818	0.0210480428416364\\
0.544233842329545	1.42859888689814\\
0.545210404829545	1.43643530725674\\
0.545654296875	1.43687972464108\\
0.991654829545455	1.43660045924273\\
0.992875532670455	1.43590494053554\\
0.993718927556818	1.43456767553139\\
0.994451349431818	1.43198495877274\\
0.995139382102273	1.42697767062131\\
0.995805220170455	1.41729813663317\\
0.996471058238636	1.39815658805458\\
0.997136896306818	1.36030364157622\\
0.997802734375	1.28544838359043\\
0.998468572443182	1.13742002201152\\
0.999134410511364	0.844689806107216\\
0.999800248579546	0.265807635221778\\
1	0\\
};
\addplot [color=black, mark=+, mark options={solid, black}, forget plot]
  table[row sep=crcr]{%
0	0\\
0.0909090909090908	0\\
0.181818181818182	0\\
0.272727272727273	0\\
0.363636363636364	0\\
0.454545454545455	0\\
0.545454545454545	0\\
0.636363636363636	0\\
0.727272727272727	0\\
0.818181818181818	0\\
0.909090909090909	0\\
1	0\\
};
\end{axis}
\end{tikzpicture}%
		\label{fig:c}
	}
	\caption{Solution to the convection-dominated problem with right-hand side $\mathbf{1}_T$, i.e., $\mcA^{-1}\mathbf{1}_T$ (left); 
		$L^2$-projection based (global) LOD basis function (center); Novel SLOD basis function (right). Their corresponding $L^2$-normalized right-hand sides are depicted in orange.}
	\label{fig:1D_basis}
\end{figure}

\begin{remark}[SLOD basis in 2d and 3d]
	While in the one-dimensional setting we were able to retrieve truly local basis functions, this is no longer true in higher dimensions. In \cref{fig:2d_basis} we depict the basis functions $\varphi^{\rm loc}_{T,4,\varepsilon}$ for an element $T$ whose patch does not reach the global boundary for $\eps = 2^{-7}$ and $\eps = 2^{-9}$. The velocity field $b$ is given as $b = \frac{1}{\sqrt{2}}\begin{pmatrix}
	1 & 1
	\end{pmatrix}^\top$. Moreover, the figure shows the response of the solution operator to the indicator function $\mathbf{1}_{T}$ that corresponds to $T$. It is clearly visible, that the SLOD basis functions decay very fast, especially in comparison to the ideal basis functions of the space \cref{eq:VH}.
\end{remark}

\begin{figure}
	\centering
	\subfigure[$\eps = 2^{-7}$]{
		\includegraphics[width=.35\linewidth]{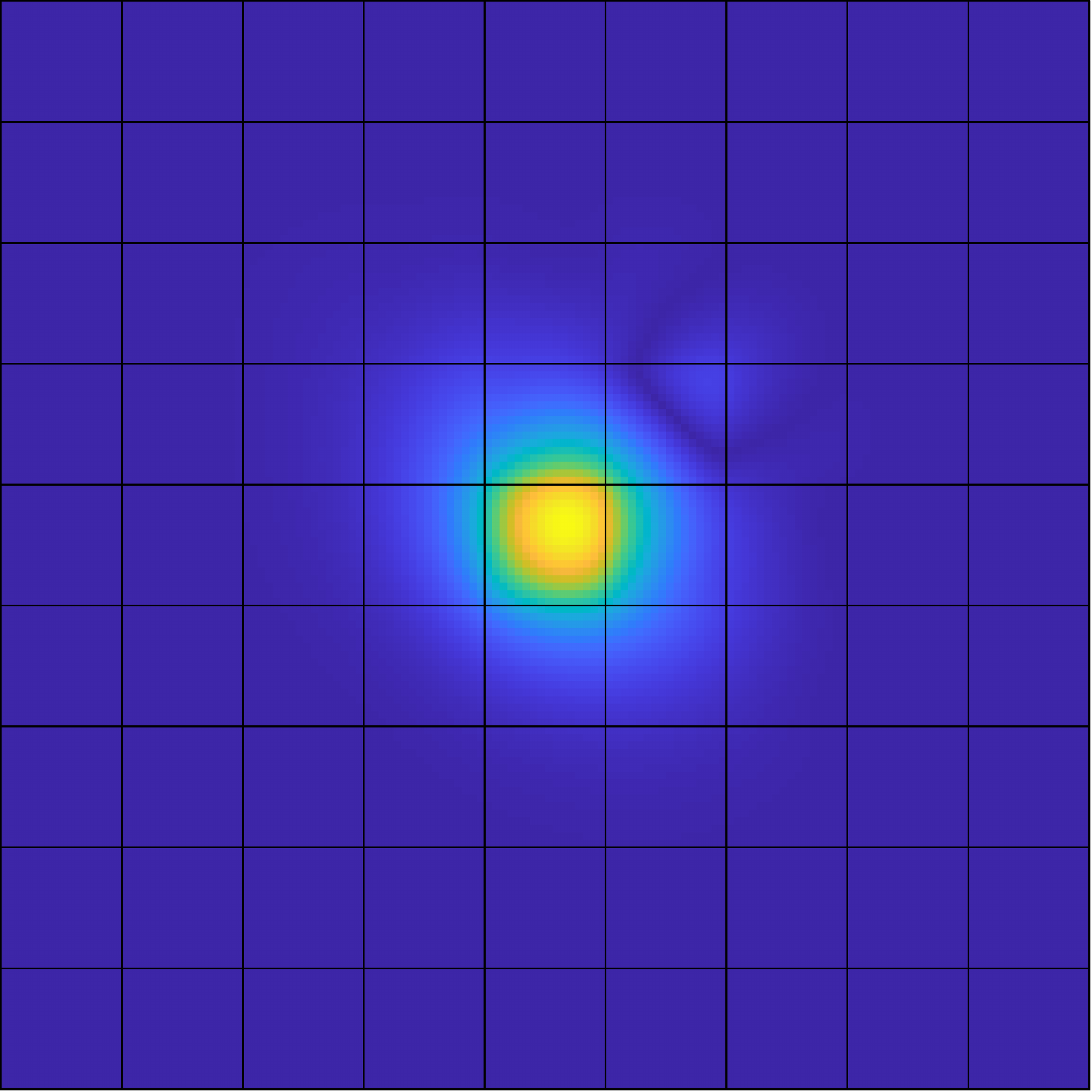}
		\includegraphics[width=.35\linewidth]{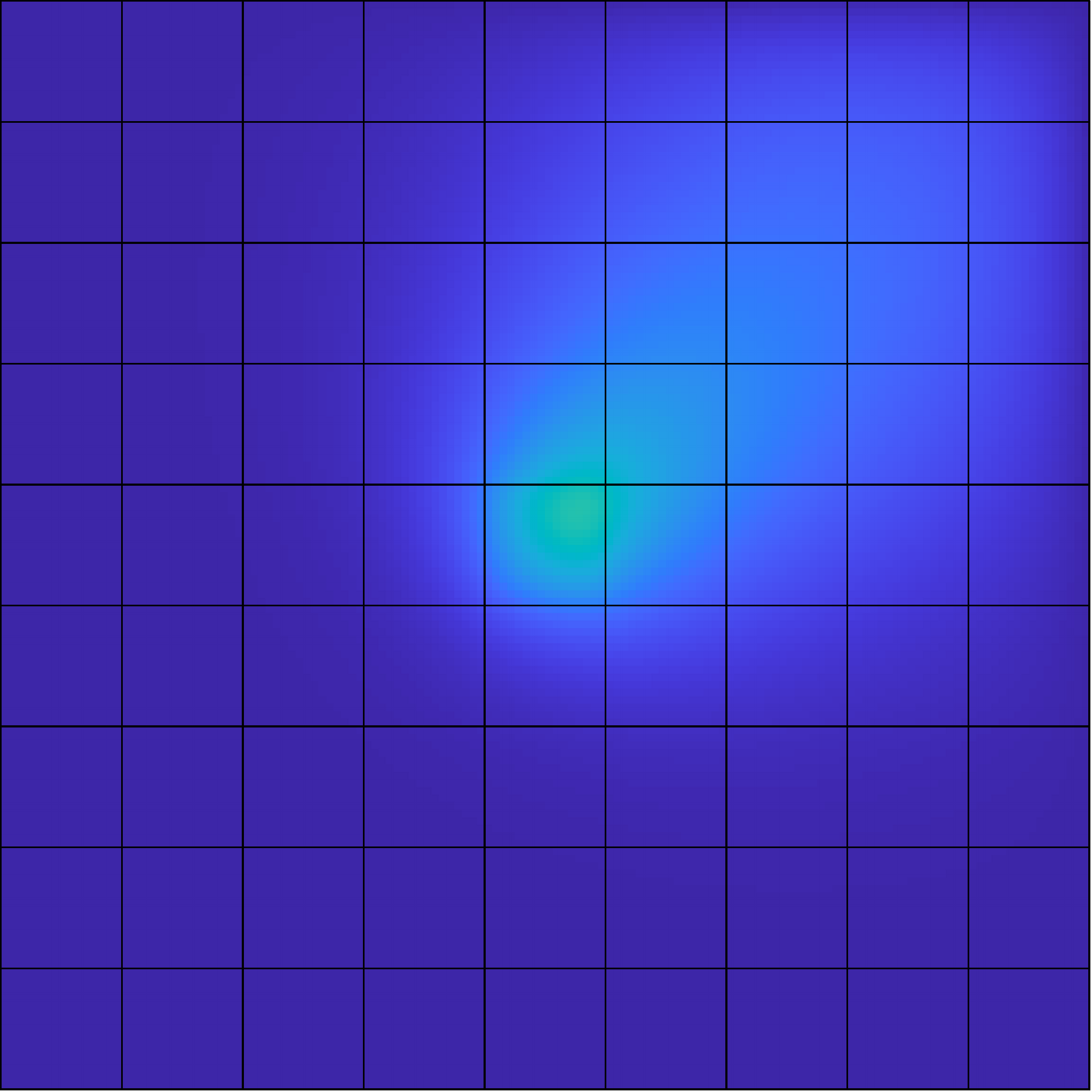}
	}
	\subfigure[$\eps = 2^{-9}$]{
		\includegraphics[width=.35\linewidth]{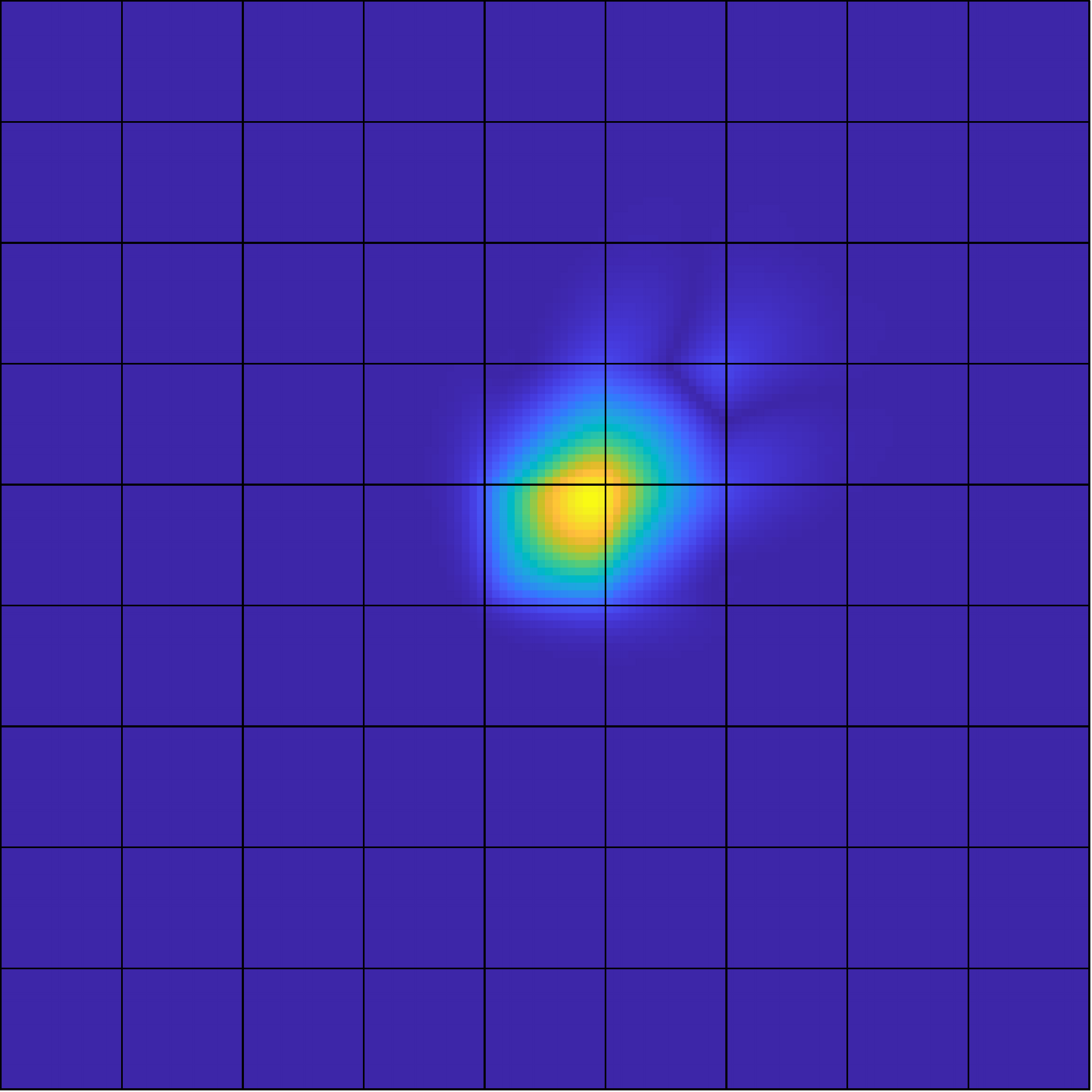}
		\includegraphics[width=.35\linewidth]{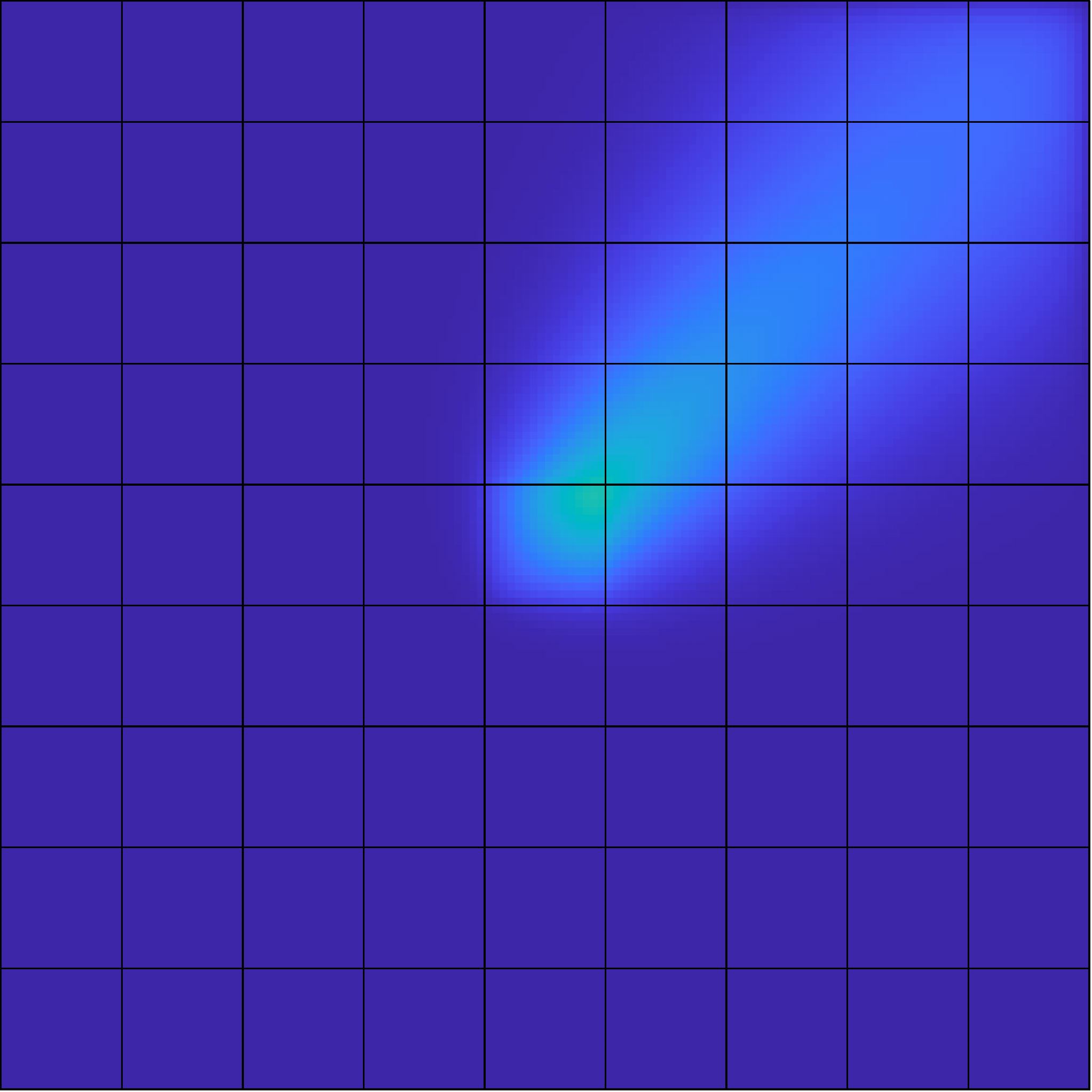}
	}	
	\caption{Absolute value of SLOD basis (left) and solution of $\mathcal{A}^{-1}\mathbf{1}_T$ (right) on $4$-th order (interior) patches, for $b = \frac{1}{\sqrt{2}}\begin{pmatrix}
		1 & 1
		\end{pmatrix}^\top$. 
	}
	\label{fig:2d_basis}
\end{figure}

%%%%%%%%%%%%%%%%%%%%%%%%%%%%%%%%%%%%%%%%%%%%%%%%%%%%%%%%%%%%%%%%%%%%%%%%%%%%%%%%
\section{Super-localized multi-scale method}
\label{sec:SLOD}
%%%%%%%%%%%%%%%%%%%%%%%%%%%%%%%%%%%%%%%%%%%%%%%%%%%%%%%%%%%%%%%%%%%%%%%%%%%%%%%%

Within this section we turn the method \cref{eq:PiGalProt} based on the ideal operator-adapted ansatz subspace $\GalSpace\subset V$ into a feasible numerical scheme, by means of the super-localization strategy introduced above.

Let the oversampling parameter $\ell$ be fixed. We define the ansatz space of the super-localized method as the span of the SLOD basis functions $\varphi_{T,\ell,\varepsilon}^{\rm loc}$ as $T$ varies in the coarse grid $\mcT_H$, namely:
\begin{equation}
\label{eq:VHell}
\SLODSpace \coloneqq \left\{\varphi_{T,\ell,\varepsilon}^{\rm loc}\,|\,T\in\mcT_H\right\}\subset V.
\end{equation}
The approximate solution provided by the SLOD method is the Galerkin projection in the space $\SLODSpace$ of the convection-dominated problem at hand with perturbed right-hand side $\Pi_H f$. In particular, the SLOD approximation to~\cref{eq:pde_weak} is the function $\PiSLODSol\in\SLODSpace$ such that, for all $\PiSLODtest\in \SLODSpace$,
\begin{equation}
\label{eq:SLOD_method}
a(\PiSLODSol,\PiSLODtest)=\inner{\Pi_H f}{\PiSLODtest}{L^2(\Omega)}.
\end{equation}

\begin{remark}[Collocation version]
	\label{rem:collocation}
	Expanding $\proj_Hf\in\mcP^0(\mcT_H)$ in the basis $\left\{g_{T,\ell,\varepsilon}\,|\,T\in\mcT_H\right\}$ 
	\begin{equation}
	\label{eq:f_coll}
	\proj_Hf=\sum_{T\in\mcT_H}c_T g_{T,\ell,\varepsilon},
	\end{equation}
	we derive an alternative discrete approximation to $u\in V$ 
	\begin{equation}
	\label{eq:u_coll}
	u^c_{H,\ell}\coloneqq \sum_{T\in\mcT_H}c_T \varphi^{\rm loc}_{T,\ell,\varepsilon}.
	\end{equation}
	For the calculation of the solution this approach seems to be very promising, as the computation only involves linear combinations of known quantities and there is no need to actually build a new FEM using the space $V_{H,\ell}$. Apparently, the condition number of the corresponding matrix, which is build using the right-hand sides $g_{T,\ell,\varepsilon}$ is poor in comparison to that of the Galerkin approach. Hence, the computation using the collocation method suffers from ill-conditioning and the method is outperformed by a classical Galerkin scheme.
\end{remark}

%%%%%%%%%%%%%%%%%%%%%%%%%%%%%%%%%%%%%%%%%%%%%%%%%%%%%%%%%%%%%%%%%%%%%%%%%%%%%%%%
\section{Error analysis}
\label{sec:error_analysis}
%%%%%%%%%%%%%%%%%%%%%%%%%%%%%%%%%%%%%%%%%%%%%%%%%%%%%%%%%%%%%%%%%%%%%%%%%%%%%%%%

A minimal requirement for the stability and convergence of the Galerkin method \cref{eq:SLOD_method} and its Collocation variant \cref{eq:u_coll} is that the set of functions $\left\{g_{T,\ell,\varepsilon}\,|\,T\in\mcT_H \right\}$ spans $\mcP^0(\mcT_H)$ in a stable way. Numerically, this is ensured as described in \cref{sec:numerical_implementation}. For the subsequent theoretical analysis, we make the following assumption.
\begin{assumption}
	\label{ass:g_basis}
	The set $\left\{g_{T,\ell,\varepsilon}\,|\,T\in\mcT_H\right\}$ is a Riesz basis of $\mcP^0(\mcT_H)$, i.e., there exists a constant $C_{rb}(\varepsilon,H,\ell)$, depending only polynomially on $H$ and $\ell$, such that 
	\begin{equation}
	\label{eq:g_basis}
	C_{rb}^{-1}(\varepsilon,H,\ell)\sum_{T\in\mcT_H}|c_T|^2
	\leq \norm{\sum_{T\in\mcT_H}c_T g_{T,\ell,\varepsilon}}{L^2(\Omega)}^2
	\leq C_{rb}(\varepsilon,H,\ell)\sum_{T\in\mcT_H}|c_T|^2.
	\end{equation}
\end{assumption}

In the following theorem we derive an a-priori error estimate for the solution to problem~\cref{eq:SLOD_method}. The upper bound is explicit in the quantity
\begin{equation}
\label{eq:sigma}
\sigma(\varepsilon,H,\ell)\coloneqq \max_{T\in\mcT_H} \norm{\gamma_{\partial_n} \varphi^{\rm loc}_{T,\ell,\varepsilon}}{L^2(\partial\omega)} 
\end{equation}
which reflects the worst-case localization error.

\begin{remark}[Exponential decay of classical LOD]
	For moderate mesh P\'eclet number, the quantity $\sigma(\varepsilon,H,\ell)$ in~\cref{eq:sigma} decays exponentially in the oversampling parameter $\ell$ (see~\cite[Appendix A]{Hauck-Peterseim} for the proof in the pure diffusion case). In particular, one can recover the a-priori error estimate with rates as for the LOD theory as in~\cite{Altmann-Henning-Peterseim,Li-Peterseim-Schedensack,Elfverson}. 
\end{remark}

\begin{theorem}[Convergence of the SLOD method]
	\label{thm:SLOD_conv}
	Let \cref{ass:g_basis} and \cref{ass:b} be satisfied. Then, there exists a constant $C>0$ independent of $H,\,\ell,\,\varepsilon$ such that, for all $f\in H^s(\Omega)$ with $s\in[0,1]$, there holds
	\begin{align}
	\label{eq:SLOD_conv}
	\normeps{u-\PiSLODSol}
	& \leq C\left(C_{stab}\norm{f-\Pi_H f}{L^2(\Omega)}
	+\varepsilon^{-1} \sigma(\varepsilon,H,\ell) C_{rb}(\varepsilon,H,\ell)^{1/2}\ell^{d/2}\norm{f}{L^2(\Omega)}  \right)\\
	\nonumber
	&\leq C\left(H^s\norm{f}{H^s(\Omega)}
	+\varepsilon^{-1} \sigma(\varepsilon,H,\ell) C_{rb}(\varepsilon,H,\ell)^{1/2}\ell^{d/2}\norm{f}{L^2(\Omega)}  \right),
	\end{align}
	where $C_{rb}(\varepsilon,H,\ell)$ and $\sigma(\varepsilon,H,\ell)$ are defined in \cref{ass:g_basis} and \cref{eq:sigma}, respectively.
\end{theorem}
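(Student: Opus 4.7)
The plan is to split the total error via the triangle inequality as
\[
\normeps{u-\PiSLODSol} \leq \normeps{u-\GalSol} + \normeps{\GalSol-\PiSLODSol},
\]
where $\GalSol\in\GalSpace$ denotes the ideal Galerkin approximation from~\eqref{eq:PiGalProt}. The first summand is controlled directly by \cref{lem:galerkin_ideal}, which under \cref{ass:b} gives $\normeps{u-\GalSol}\leq C_{stab}\|f-\Pi_H f\|_{L^2(\Omega)}$; combining this with the projection estimate~\eqref{eq:projHb} and standard Hilbert-space interpolation for $s\in[0,1]$ yields the second inequality in~\eqref{eq:SLOD_conv}.

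For the localization term, I would expand $\Pi_H f=\sum_{T\in\mcT_H}c_T\,g_{T,\ell,\varepsilon}$ via \cref{ass:g_basis}, observe that by linearity $\GalSol=\mcA^{-1}\Pi_H f=\sum_T c_T\varphi_{T,\ell,\varepsilon}$, and introduce the auxiliary candidate $\tilde u_{H,\ell}\coloneqq\sum_T c_T\varphi^{\rm loc}_{T,\ell,\varepsilon}\in\SLODSpace$. Summing the identity of \cref{lem:SLOD_loc_error} over $T$ produces, for every $v\in V$, the residual identity
\[
a(\GalSol-\tilde u_{H,\ell},v) = -\varepsilon\sum_{T\in\mcT_H} c_T\,\dual{\gamma_{\partial_n}\varphi^{\rm loc}_{T,\ell,\varepsilon}}{\gamma_0 v}{X'\times X}.
\]
Bounding each dual pairing by the patch-wise $L^2$-trace inequality used just after \cref{lem:SLOD_loc_error}, then applying the discrete Cauchy--Schwarz inequality together with the upper Riesz-basis estimate $\sum_T|c_T|^2\leq C_{rb}\|\Pi_H f\|^2_{L^2(\Omega)}\leq C_{rb}\|f\|^2_{L^2(\Omega)}$, the definition~\eqref{eq:sigma} of $\sigma$, and the finite-overlap bound $\sum_T\|v\|^2_{H^1(\omega_T)}\leq C\ell^d\|v\|^2_{H^1(\Omega)}$, I obtain the residual estimate
\[
|a(\GalSol-\tilde u_{H,\ell},v)| \leq C\,\varepsilon\,\sigma(\varepsilon,H,\ell)\,C_{rb}^{1/2}(\varepsilon,H,\ell)\,\ell^{d/2}\,\|f\|_{L^2(\Omega)}\,\|v\|_{H^1(\Omega)}.
\]

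To close the argument, the Galerkin orthogonality $a(\GalSol-\PiSLODSol,w)=0$ for all $w\in\SLODSpace$, applied with $w=\tilde u_{H,\ell}-\PiSLODSol\in\SLODSpace$, together with the coercivity identity~\eqref{eq:wp}, yields
\[
\varepsilon\,\seminorm{\GalSol-\PiSLODSol}{V}^2 = a(\GalSol-\PiSLODSol,\,\GalSol-\tilde u_{H,\ell}).
\]
Continuity~\eqref{eq:wp2} with $C_a=O(1)$ under $\varepsilon\leq 1$ produces the Péclet-type factor and gives $\seminorm{\GalSol-\PiSLODSol}{V}\leq C_a\varepsilon^{-1}\seminorm{\GalSol-\tilde u_{H,\ell}}{V}$. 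Testing the residual bound with $v=\GalSol-\tilde u_{H,\ell}$ and invoking Poincaré's inequality $\|v\|_{H^1(\Omega)}\leq C\seminorm{v}{V}$ cancels one power of $\varepsilon$ and delivers $\seminorm{\GalSol-\tilde u_{H,\ell}}{V}\leq C\sigma C_{rb}^{1/2}\ell^{d/2}\|f\|_{L^2(\Omega)}$. Chaining the two inequalities and transferring to $\normeps{\cdot}$ via the norm equivalence~\eqref{eq:norm_equiv} yields~\eqref{eq:SLOD_conv}. The most delicate step is the geometric bookkeeping behind the $\ell^{d/2}$ factor---ensuring that the patch-wise trace constant is independent of $\varepsilon$, $H$, and $\ell$, and that the overlap multiplicity of level-$\ell$ patches is uniformly bounded by $C\ell^d$.
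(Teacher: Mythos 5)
Your proposal is correct and follows essentially the same route as the paper: triangle inequality splitting off the ideal error (handled by \cref{lem:galerkin_ideal}), then bounding the localization error via the candidate $\sum_T c_T\varphi^{\rm loc}_{T,\ell,\varepsilon}$, the normal-derivative identity of \cref{lem:SLOD_loc_error}, Cauchy--Schwarz with the Riesz-basis bound and the $\ell^{d/2}$ overlap count, and finally the quasi-optimality of the Galerkin projection onto $\SLODSpace$ (which you re-derive from Galerkin orthogonality, coercivity and continuity, whereas the paper simply cites C\'ea's lemma). The $\varepsilon$-bookkeeping, including the $\varepsilon^{-1}$ factor from $C_a/\varepsilon$ and the cancellation in the residual estimate, matches the paper's proof.
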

\begin{proof}
	By triangular inequality, we get:
	\begin{equation}
	\label{eq:SLOD_proof}
	\normeps{u-\PiSLODSol}\leq 
	\normeps{u-\PiGalSol} + \normeps{\PiGalSol-\PiSLODSol}.
	\end{equation}
	The first term in~\cref{eq:SLOD_proof} represents the discretization error of the ideal multi-scale method, and its upper bound is given by~\cref{lem:galerkin_ideal}. We consider now the second term in~\cref{eq:SLOD_proof}, which represents the localization error. 
	Observe that $\PiGalSol$ solves the continuous equation for right-hand side $\Pi_Hf$. As a consequence, the SLOD solution $\PiSLODSol$ is the Galerkin approximation of $\PiGalSol$ in the finite dimensional space $\SLODSpace$. Using the norm equivalence~\cref{eq:norm_equiv} and applying C\'ea's Lemma, we get
	\begin{equation*}
	\normeps{\PiGalSol-\PiSLODSol}
	\lesssim \seminorm{\PiGalSol-\PiSLODSol}{V}
	\lesssim\frac{1}{\varepsilon} \inf_{\PiSLODtest\in\SLODSpace}\seminorm{\PiGalSol-\PiSLODtest}{V},
	\end{equation*}
	where the notation $x\lesssim y$ means $x\leq c y$ with $c$ positive constant independent of the mesh size parameter $H$, the localization parameter $\ell$ and the diffusion coefficient $\varepsilon$.
	Given the expansion of $\proj_Hf$ in the basis $\left\{g_{T,\ell,\varepsilon}\,|\,T\in\mcT_H\right\}$, namely, $\proj_Hf=\sum_{T\in\mcT_H}c_T g_{T,\ell,\varepsilon}$, we can express $\PiGalSol$ as 
	\begin{equation*}
	\PiGalSol=\sum_{T\in\mcT_H}c_T\mcA^{-1}g_{T,\ell,\varepsilon}
	=\sum_{T\in\mcT_H}c_T\varphi_{T,\ell,\varepsilon}.
	\end{equation*} 
	For the particular choice $\PiSLODtest=\sum_{T\in\mcT_H}c_T\varphi^{loc}_{T,\ell,\varepsilon}$, we obtain that $e\coloneqq \PiGalSol-\PiSLODtest\in V$ fulfills:
	\begin{align*}
	\seminorm{e}{V}^2
	&=\frac{1}{\varepsilon}a(\PiGalSol-\PiSLODtest,e)
	=\frac{1}{\varepsilon}\sum_{T\in\mcT_H}c_T a(\varphi_{T,\ell,\varepsilon}-\varphi_{T,\ell,\varepsilon}^{loc},e)\\
	&= - \sum_{T\in\mcT_H}c_T \dual{\gamma_{\partial_n} \varphi_{T,\ell,\varepsilon}^{\rm loc}}{\gamma_0 e}{X^\prime\times X}
	\leq \sum_{T\in\mcT_H} \abs{c_T} \norm{\gamma_{\partial_n} \varphi_{T,\ell,\varepsilon}^{\rm loc}}{L^2(\partial \omega)} C_{\gamma_0} \norm{e}{H^1(\omega)}\\
	&\leq \sigma(\varepsilon,H,\ell) \sum_{T\in\mcT_H} \abs{c_T} C_{\gamma_0} \norm{e}{H^1(\omega)},
	\end{align*}
	where we employed \cref{lem:SLOD_loc_error} in the third equality and~\cref{eq:sigma} in the last inequality. For simplicity, we omit the dependence of $\sigma$ and $C_{rb}$ on $\eps,H$ and $\ell$ in the rest of the proof. As a consequence, thanks to \cref{ass:g_basis}, \cref{eq:f_coll}, the Poincar\'e inequality and~\cref{eq:projHa}, there holds:
	\begin{align*}
		\seminorm{e}{V}^2
		&\lesssim  \sigma \sum_{T\in\mcT_H} \abs{c_T} \norm{e}{H^1(\omega)}
		\lesssim \sigma \sqrt{\sum_{T\in\mcT_H} \abs{c_T}^2} 
		\sqrt{\sum_{T\in\mcT_H}\norm{e}{H^1(\omega)}^2}\\
		&\lesssim \sigma \left(C_{rb}^{1/2}\norm{\proj_Hf}{L^2(\Omega)}\right) \left(C_{ol}\ell^{d/2}\norm{e}{H^1(\Omega)}\right)
		\lesssim \sigma \left(C_{rb}^{1/2}\norm{f}{L^2(\Omega)}\right)
		\left(C_{ol}\ell^{d/2}\right)\seminorm{e}{V}, 	
	\end{align*}
	where $C_{ol}^2\ell^d$ bounds the number of patches containing a fixed mesh element. 
	In particular, we have proved that
	\[
	\seminorm{e}{V}\lesssim\sigma C_{rb}^{1/2} \ell^{d/2}\norm{f}{L^2(\Omega)},
	\]
	so that the estimate \cref{eq:SLOD_conv} follows.
\end{proof}

As previously observed for the ideal multi-scale method, upper bounds on the SLOD error could be derived in the abstract setting $\mcA^{-1}\colon X\rightarrow Y$, for suitable Banach spaces $X$ and $Y$.

Let us point out that, in the case of a piecewise constant right-hand side $f$, the first term in \cref{eq:SLOD_conv} vanishes. Moreover, the $\varepsilon$-dependence of the second expression is dominated by the exponentially decaying quantity $\sigma(\eps,H,\ell)$. Making use of \cref{con:sup_exp_decay}, we derive that the oversampling condition $\ell \gtrsim \abs{\log (\varepsilon H)}^{\frac{d-1}{d}}$ guarantees that the SLOD error convergences with order $H$.

%%%%%%%%%%%%%%%%%%%%%%%%%%%%%%%%%%%%%%%%%%%%%%%%%%%%%%
\section{Numerical implementation and stable selection of basis}
\label{sec:numerical_implementation}
%%%%%%%%%%%%%%%%%%%%%%%%%%%%%%%%%%%%%%%%%%%%%%%%%%%%%%%%%%%%%%%%%%%%%%%%

This section discusses the implementation of the proposed numerical method, with particular attention to the computation of a basis $\{\varphi_{T,\ell,\varepsilon}^{\rm loc}\,|\, T\in\mcT_H\}$ for the ansatz space $\SLODSpace$ which is associated with a  basis $\{g_{T,\ell,\varepsilon}^{\rm loc}\,|\, T\in\mathcal T_H\}$ of $\mcP^0(\mcT_H)$ via ~\cref{eq:phi_loc}. The Riesz stability of the basis in the sense of \cref{ass:g_basis} has to be respected. 

For simplicity, we take $\Omega$ as the unit hypercube in $d$ dimensions, i.e., $\Omega=(0,1)^d$, discretized by means of a quadrilateral mesh $\mcT_H$.
Given $\ell \geq 1$, we choose an element $T\in\mcT_H$ and consider the corresponding patch $\omega = N^\ell(T)$. In a first step, for each element $K\in\mcT_{H,\omega}$ in the patch mesh, we compute the response of the solution operator restricted to the patch, denoted by $\mcA_{\omega}^{-1}$, to its characteristic function $\mathbf{1}_{K}$, i.e., $\mcA_{\omega}^{-1}\mathbf{1}_{K}$.
By construction, the target basis function $\varphi_{T,\ell,\varepsilon}^{\rm loc}$ is in the span of these $\#\mcT_{H,\omega}\approx \ell^d$ local responses. In a second step, we search the function $\varphi^{\rm loc}_{T,\ell,\varepsilon} \in \operatorname{span}\left\lbrace \mcA_{\omega}^{-1}\mathbf{1}_{K} | K\in \mcT_{H,\omega} \right\rbrace$ in this low-dimensional space by minimizing normal derivatives subject to a unit mass constraint. This constraint minimization is realized by computing the smallest eigenvalue of the symmetric positive (semi-)definite matrix 
\begin{equation}
\left(\frac{1}{|T||K|}\int_{\partial\omega}\gamma_{\partial_n}(\mcA_{\omega}^{-1}\mathbf{1}_{K})
\gamma_{\partial_n}(\mcA_{\omega}^{-1}\mathbf{1}_{T})\,\D s\right)_{K,T\in\mcT_{H,\omega}}.
\label{eq:normal_derivative_eigenvalue}
\end{equation}
The corresponding eigenvector $(c_K)_{K\in\mcT_{H,\omega}}$ contains the coefficients of the expansion of $\varphi_{T,\ell,\varepsilon}^{\rm loc}$ in terms of the local responses. At the same time, the coefficients are the values of $g_{T,\ell,\varepsilon}^{\rm loc}$ in the elements of the patch. 

Unfortunately, the smallest eigenvalue may not be simple or there might be a cluster of small eigenvalues. Then a particular choice of eigenfunction may not always be favorable with regard to the global stability of the basis in the sense of \cref{ass:g_basis}. Especially for patches that touch the boundary of the global domain $\Omega$, an additional optimization step ensures a linear independence of the functions computed in different patches. For this purpose, we incorporate eigenfunctions associated with a certain range of the lowermost eigenvalues. Given all the eigenvalues $\lambda_{1}\leq \lambda_2\leq\dots\leq\lambda_{\#\mcT_{H,\omega}}$ and some parameter $p\geq1$, we choose all indices $1\leq i \leq \#\mcT_{H,\omega}$ so that
\begin{align}
\label{eq:lambda_condition}
\frac{\lambda_{i}}{\lambda_{\#\mcT_{H,\omega}}} \leq \max\left\lbrace \left(\frac{\lambda_{1}}{\lambda_{\#\mcT_{H,\omega}}}\right)^{\frac{1}{p}} , 1e{-}10 \right\rbrace,
\end{align}
and we denote the resulting set of indices by $I$. The choice $p = 1$ reflects the case where only the smallest (potentially multiple) eigenvalue is used, and thus we use $p>1$ in our implementation.

Among these candidate functions with close to minimal normal derivative at the boundary of the patch, we choose the one that maximizes a weighted $L^2(\omega)$-norm under the unit mass constraint. The piecewise constant weight function is zero in the central element $T$ and grows in a $b$-dependent way with a certain distance from the central element.  Let us introduce the midpoints $m_{T},m_{K}\in\R^d$ of the central element and an element of the patch, respectively. We define the distance $\operatorname{dist}(T,K)$ between the elements as
\begin{align*}
\operatorname{dist}(T,K) \coloneqq H (m_{K} - m_{T}) \in \mathbb{Z}^{d},
\end{align*}
and introduce for each element $K\in \mcT_{H,\omega}\setminus T$ its weight by
\begin{align}
\label{eq:weight}
w_{K} \coloneqq \norm{\operatorname{dist}(T,K) - \frac{b(m_{T})}{\norm{b(m_{T})}{2}}}{\ell^\infty}^{p_{w}},
\end{align}
with $p_{w}\geq1$ parameter.
By \cref{eq:weight} we ensure that the elements in the direction of $b$ are less penalized. For a realization for an order $1$ patch, see \cref{fig:weights}.
\begin{figure}
	\centering
	\scalebox{.7}{
		% This file was created by matlab2tikz.
%
%The latest updates can be retrieved from
%  http://www.mathworks.com/matlabcentral/fileexchange/22022-matlab2tikz-matlab2tikz
%where you can also make suggestions and rate matlab2tikz.
%
\begin{tikzpicture}

\begin{axis}[%
width=3.566in,
height=3.566in,
at={(1.236in,0.481in)},
scale only axis,
xmin=-0.1,
xmax=0.2,
xticklabels={},
ymin=-0.1,
ymax=0.2,
yticklabels={},
axis x line=none,
axis y line=none
]

\addplot[area legend, line width=1.0pt, table/row sep=crcr, patch, mesh, patch type=rectangle, color=black, forget plot, patch table with point meta={%
0	1	2	3	0\\
4	5	6	7	0\\
8	9	10	11	0\\
12	13	14	15	0\\
16	17	18	19	0\\
20	21	22	23	0\\
24	25	26	27	0\\
28	29	30	31	0\\
32	33	34	35	0\\
}]
table[row sep=crcr] {%
x	y\\
0	0\\
0.0625	0\\
0.0625	0.0625\\
0	0.0625\\
0.0625	0\\
0.125	0\\
0.125	0.0625\\
0.0625	0.0625\\
0.125	0\\
0.1875	0\\
0.1875	0.0625\\
0.125	0.0625\\
0	0.0625\\
0.0625	0.0625\\
0.0625	0.125\\
0	0.125\\
0.0625	0.0625\\
0.125	0.0625\\
0.125	0.125\\
0.0625	0.125\\
0.125	0.0625\\
0.1875	0.0625\\
0.1875	0.125\\
0.125	0.125\\
0	0.125\\
0.0625	0.125\\
0.0625	0.1875\\
0	0.1875\\
0.0625	0.125\\
0.125	0.125\\
0.125	0.1875\\
0.0625	0.1875\\
0.125	0.125\\
0.1875	0.125\\
0.1875	0.1875\\
0.125	0.1875\\
};

\node[align=left]
at (0.03125,0.03125) {9.7012};
\node[align=left]
at (0.09375,0.03125) {7.3087};
\node[align=left]
at (0.15625,	0.03125) {7.3087};
\node[align=left]
at (0.03125,	0.09375) {9.7012};
\node[align=left]
at (0.09375,	0.09375) {0};
\node[align=left]
at (0.15625,	0.09375) {0.1722};
\node[align=left]
at (0.03125,	0.15625) {9.7012};
\node[align=left]
at (0.09375,	0.15625) {0.3422};
\node[align=left]
at (0.15625,	0.15625) {0.0160};
\end{axis}
\end{tikzpicture}%
	}
	\caption{Weights $w_{K}$ for an element that does not reach the boundary, for constant velocity $b$ as given in \cref{eq:b_cos_sin}, and $p_{w} = 2$.}
	\label{fig:weights}
\end{figure}
Eventually, we search the function in the space of the previously selected candidate right-hand sides $\operatorname{span}\left\lbrace g_{T,\ell,\varepsilon,i} | i\in I\right\rbrace$ that minimizes a weighted $L^2(\omega)$-norm subject to the unit mass constraint. This constraint minimization is realized by computing the smallest eigenvalue of the symmetric positive definite matrix 
\begin{align}
\left(\frac{1}{\norm{g_{T,\ell,\varepsilon,i}}{L^2(\omega)}\norm{g_{T,\ell,\varepsilon,j}}{L^2(\omega)}}\sum_{K\in \mcT_{H,\omega}} \int_{K} w_{K} g_{T,\ell,\varepsilon,i}g_{T,\ell,\varepsilon,j} \D x\right)_{i,j\in I}. 
\label{eq:L2w-norm}
\end{align}

In this way, we compute for every element $T$ of the coarse mesh $\mcT_H$ the basis function $\varphi^{\rm loc}_{T,\ell,\varepsilon}$ and hence build the space $V_{H,\ell}$. From our numerical experiments, the choices $p = 1.5$ and $ p_{w}=2 $ produce good results. In \cref{alg:version2} we detail the full algorithm for the computation of the super-localized basis.

\begin{algorithm}
	\caption{Basis selection}
	\label{alg:version2}
	\begin{algorithmic}[1]
		\Require $\ell\geq 1$, $p> 1$, $p_w\geq 1$
		\Ensure stable bases $\{g_{T,\ell,\varepsilon}^{\rm loc}\,|\, T\in\mcT_H\}$ for $\mcP^0(\mcT_H)$
		\For{$T\in\mcT_H$}
		\State set $\omega=N^\ell(T)$, $\Psi_T=\emptyset$, $\Lambda_T=\emptyset$, $\Xi_T=\emptyset$ and $G_T=\emptyset$
		\For{$K\in\mcT_{H,\omega}$}	
		\State $\Psi_T\gets\mcA_{\omega}^{-1}\mathbf{1}_{K}$
		\State compute the weight $w_K$
		\Comment See \cref{eq:weight}
		\EndFor
		\State $\Lambda=(\lambda_1,\ldots,\lambda_{\#\mcT_{H,\omega}}), \Xi = (\xi_1,\ldots,\xi_{\#\mcT_{H,\omega}})\gets \Psi_T$
		\Comment{Eigendecomposition}
		\State $\Lambda_T\gets \lambda_i$ fulfilling~\cref{eq:lambda_condition}
		\State $\Xi_T\gets \xi_i$ s.t. $\lambda_i\in\Lambda_T$
		\State $G_T\gets g_{T,\ell,\varepsilon,i}$ with $g_{T,\ell,\varepsilon,i} = \mcA\xi_i$ for $\xi_i\in\Xi_T$
		\State $ g_{T,\ell,\varepsilon}^{\rm loc} \gets g_{T,\ell,\varepsilon,i} $ eigenvector corresponding to smallest eigenvalue of \cref{eq:L2w-norm}
		\State $\varphi_{T,\ell,\varepsilon}^{\rm loc} \gets g_{T,\ell,\varepsilon}^{\rm loc}$
		\Comment See \cref{eq:phi_loc}
		\EndFor
	\end{algorithmic}
\end{algorithm}

%%%%%%%%%%%%%%%%%%%%%%%%%%%%%%%%%%%%%%%%%%%%%%%%%%%%%%%%%%%%%%%%%%%%%%%%%%%%%%%%
\section{Numerical experiments}
\label{sec:numerical_experiments}
%%%%%%%%%%%%%%%%%%%%%%%%%%%%%%%%%%%%%%%%%%%%%%%%%%%%%%%%%%%%%%%%%%%%%%%%%%%%%%%%

In this section, we demonstrate the performance of our method. For that purpose, we briefly introduce the general configuration. All our experiments were performed in Matlab. The computational domain $\Omega$ is given as the unit hypercube in $d$ dimensions, i.e., $\Omega=(0,1)^d$. We introduce a fine quadrilateral mesh $\mcT_h$ which is supposed to resolve the small parameter $\eps$ and hence serves for the computation of reference solutions and to compute the localized basis functions by means of the standard Galerkin FE method on the space of piecewise bilinear polynomials. Moreover, we consider a coarse quadrilateral mesh $\mcT_H$ as a target scale that does not resolve $\eps$.

%%%%%%%%%%%%%%%%%%%%%%%%%%%%%%%%%%%%%%%%%%%%%%%%%%%%%%%%%%%%%%%%%%%%%%%%
\subsection{Two-dimensional experiment}
%%%%%%%%%%%%%%%%%%%%%%%%%%%%%%%%%%%%%%%%%%%%%%%%%%%%%%%%%%%%%%%%%%%%%%%%

We start by presenting two-dimensional experiments and compare our approach with the streamline upwind/Petrov--Galerkin (SUPG) method (see~\cite{Franca-Frey-Hughes}) that we briefly recall below. Let $U_H$ denote the standard Galerkin FE space of piecewise bilinear polynomials on the coarse mesh $\mcT_H$.
The SUPG approximation $u^{SUPG}_H\in U_H$ satisfies, for all $v_H\in U_H$
\[
B_{SUPG}(u_H^{SUPG},v_H) = F_{SUPG} (v_H), 
\]
with 
\[
B_{SUPG}(u_H^{SUPG},v_H)\coloneqq a(u_H^{SUPG},v_H) +
\delta_{SUPG}\sum_{T\in\mcT_H}\inner{b\cdot\nabla u_H^{SUPG}}{b\cdot\nabla v_H}{L^2(T)}
\]
and
\[
F_{SUPG} (v_H) = \dual{f}{v_H}{H^{-1}(\Omega)\times H^{1}_0(\Omega)} +
\delta_{SUPG}\sum_{T\in\mcT_H}\inner{f}{b\cdot\nabla v_H}{L^2(T)}.
\]
The symbol $\delta_{SUPG}$ denotes the stabilization parameter, and in the numerics it is chosen as $ \delta_{SUPG} = H^2 $.

%%%%%%%%%%%%%%%%%%%%%%%%%%%%%%%%%%%%%%%%%%%%%%%%%%%%%%%%%%%%%%%%%%%%%%%%
\subsubsection{Constant velocity}
%%%%%%%%%%%%%%%%%%%%%%%%%%%%%%%%%%%%%%%%%%%%%%%%%%%%%%%%%%%%%%%%%%%%%%%%

First, we follow the experiment from \cite[Section 6]{Li-Peterseim-Schedensack}. We choose the right-hand side $f\equiv 1$ and the constant velocity field $b$ as
\begin{align}
b(x) = \begin{pmatrix}
\cos(0.7) & \sin(0.7)
\end{pmatrix}^\top. \label{eq:b_cos_sin}
\end{align}
The singular perturbation parameter $\eps$ is chosen to be $2^{-7}$. In this configuration we expect boundary layers at the right and top boundaries. Moreover, in this situation the right-hand side is piecewise constant and hence, the first expression in our error estimate in \cref{eq:SLOD_conv} vanishes. Therefore, we observe the localization error. 

The mesh size $h$ of the fine mesh $\mcT_h$ is chosen as $h = 2^{-10}$. \Cref{fig:solutions_2d_const} shows the corresponding reference solution, its FE and SLOD approximation on a coarse mesh with $H = 2^{-4}$. The oversampling parameter $\ell$ is chosen equal to 1.
\begin{figure}
	\centering
	\subfigure[]{\includegraphics[width=.3\linewidth]{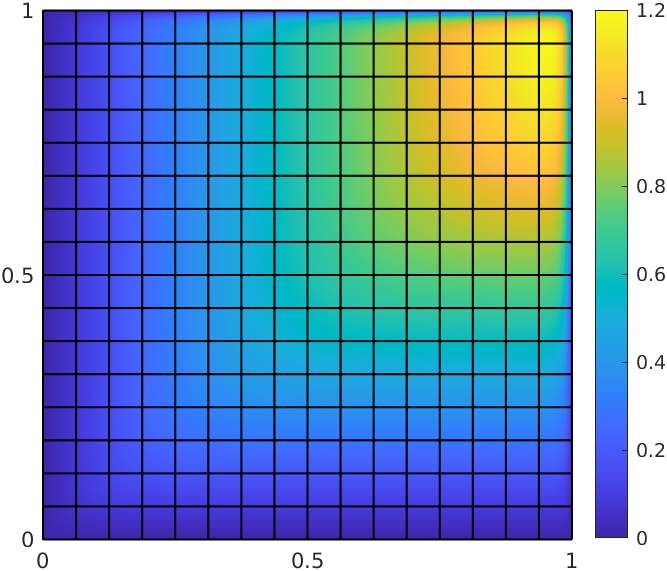}}
	\subfigure[]{\includegraphics[width=.3\linewidth]{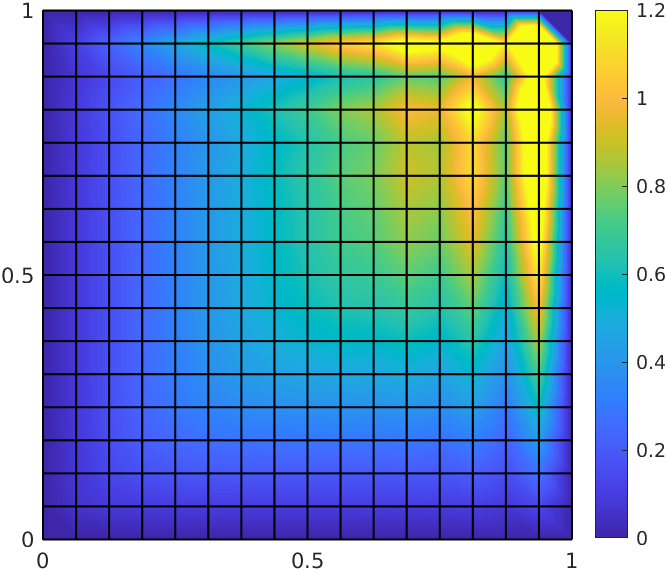}}
	\subfigure[]{\includegraphics[width=.3\linewidth]{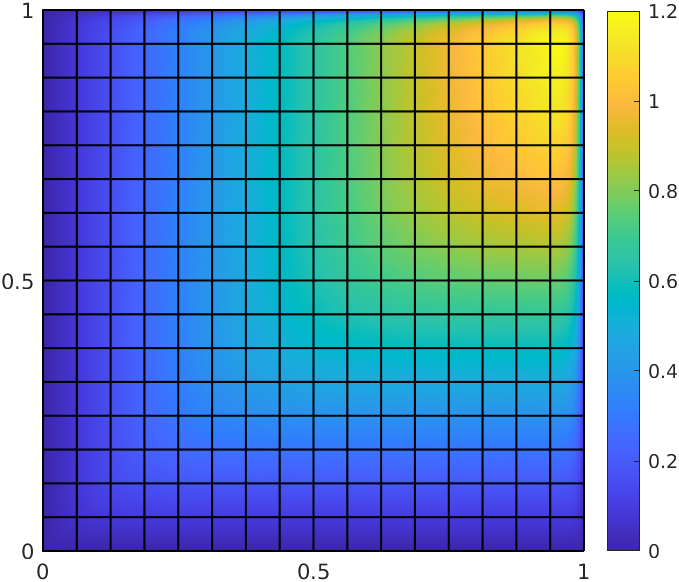}}
	\caption{Reference solution computed on fine mesh with $h=2^{-10}$ (a), FE approximation (b) and SLOD approximation with $\ell = 1$ (c) on coarse mesh with $H=2^{-4}$ for the constant velocity field $b$ given in \cref{eq:b_cos_sin}, right-hand side $f\equiv 1$ and $\eps = 2^{-7}$.}
	\label{fig:solutions_2d_const}
\end{figure}

We observe that the SLOD resolves the layer, whereas the classic FE approximation suffers from severe instabilities. \Cref{fig:convergence_2d_const} shows the convergence rates of the SLOD method for different oversampling parameters $\ell$ and coarse mesh sizes $H$ as well as the error of the FE and SUPG methods. 
\begin{figure}
	\centering
	% This file was created by matlab2tikz.
%
%The latest updates can be retrieved from
%  http://www.mathworks.com/matlabcentral/fileexchange/22022-matlab2tikz-matlab2tikz
%where you can also make suggestions and rate matlab2tikz.
%
\definecolor{mycolor1}{rgb}{0.00000,0.44700,0.74100}%
\definecolor{mycolor2}{rgb}{0.85000,0.32500,0.09800}%
\definecolor{mycolor3}{rgb}{0.92900,0.69400,0.12500}%
\definecolor{mycolor4}{rgb}{0.49400,0.18400,0.55600}%
\definecolor{mycolor5}{rgb}{0.46600,0.67400,0.18800}%
\definecolor{mycolor6}{rgb}{0.30100,0.74500,0.93300}%
\begin{tikzpicture}

\begin{axis}[%
width=5.749cm,
height=6cm,
at={(0cm,0cm)},
scale only axis,
xmode=log,
xmin=0.01,
xmax=0.25,
xtick={0.015625,0.03125,0.0625,0.125,0.25},
xticklabels={{$2^{-6}$},{$2^{-5}$},{$2^{-4}$},{$2^{-3}$},{$2^{-2}$}},
xminorticks=true,
xlabel style={font=\color{white!15!black}},
xlabel={$H$},
ymode=log,
ymin=1e-10,
ymax=10,
yminorticks=true,
axis background/.style={fill=white},
axis x line*=bottom,
axis y line*=left,
xmajorgrids,
ymajorgrids,
legend style={at={(1.2,-0.4)}, anchor=south, legend columns=5, legend cell align=left, align=left, draw=white!15!black}
]
\addplot [color=mycolor1, line width=1.5pt, mark size=1.9pt, mark=o, mark options={solid, mycolor1}]
  table[row sep=crcr]{%
0.25	0.00858770024728118\\
0.125	0.00476620235964485\\
0.0625	0.00220045173795469\\
0.03125	0.00226384936811157\\
0.015625	0.0102853014839651\\
};
\addlegendentry{$\ell =1$}

\addplot [color=mycolor2, line width=1.5pt, mark size=3.8pt, mark=o, mark options={solid, mycolor2}]
  table[row sep=crcr]{%
0.25	0.0171677411535904\\
0.125	0.24898378016213\\
0.0625	0.00131659270887952\\
0.03125	0.00131208619831275\\
0.015625	0.00016358020261887\\
};
\addlegendentry{$\ell =2$}

\addplot [color=mycolor3, line width=1.5pt, mark size=5.6pt, mark=o, mark options={solid, mycolor3}]
  table[row sep=crcr]{%
0.125	3.20541237088633e-06\\
0.0625	1.25387038233941e-07\\
0.03125	7.9162671920728e-08\\
0.015625	1.27421520525237e-07\\
};
\addlegendentry{$\ell =3$}

\addplot [color=mycolor4, line width=1.5pt, mark size=7.5pt, mark=o, mark options={solid, mycolor4}]
  table[row sep=crcr]{%
0.125	4.23517136735739e-08\\
0.0625	2.31249560556067e-09\\
0.03125	9.87783814781757e-09\\
0.015625	1.42337840846482e-08\\
};
\addlegendentry{$\ell =4$}

\addplot [color=mycolor5, line width=1.5pt, mark size=7.5pt, mark=x, mark options={solid, mycolor5}]
  table[row sep=crcr]{%
0.25	0.780547709427826\\
0.125	0.246602411157313\\
0.0625	0.107345613184665\\
0.03125	0.0383353824108605\\
0.015625	0.0111121402695505\\
};
\addlegendentry{FEM}

\addplot [color=mycolor6, line width=1.5pt, mark size=7.5pt, mark=diamond, mark options={solid, mycolor6}]
  table[row sep=crcr]{%
0.25	0.256671434180938\\
0.125	0.176007311353391\\
0.0625	0.0955051055850821\\
0.03125	0.0370498517206095\\
0.015625	0.011030881161918\\
};
\addlegendentry{SUPG}

\addplot [color=black, dashed, line width=1.5pt]
  table[row sep=crcr]{%
0.25	0.25\\
0.015625	0.015625\\
};
\addlegendentry{$\mathcal{O}(H)$}

\addplot [color=black, dashdotted, line width=1.5pt]
  table[row sep=crcr]{%
0.25	0.0625\\
0.015625	0.000244140625\\
};
\addlegendentry{$\mathcal{O}(H^2)$}

\addplot [color=black, dotted, line width=1.5pt]
  table[row sep=crcr]{%
0.25	0.015625\\
0.015625	3.814697265625e-06\\
};
\addlegendentry{$\mathcal{O}(H^3)$}

\end{axis}

\begin{axis}[%
width=5.749cm,
height=6cm,
at={(7.564cm,0cm)},
scale only axis,
xmode=log,
xmin=0.01,
xmax=0.25,
xtick={0.015625,0.03125,0.0625,0.125,0.25},
xticklabels={{$2^{-6}$},{$2^{-5}$},{$2^{-4}$},{$2^{-3}$},{$2^{-2}$}},
xminorticks=true,
xlabel style={font=\color{white!15!black}},
xlabel={$H$},
ymode=log,
ymin=1e-07,
ymax=100,
yminorticks=true,
axis background/.style={fill=white},
axis x line*=bottom,
axis y line*=left,
xmajorgrids,
ymajorgrids
]
\addplot [color=mycolor1, line width=1.5pt, mark size=1.9pt, mark=o, mark options={solid, mycolor1}, forget plot]
  table[row sep=crcr]{%
0.25	0.324081074447027\\
0.125	0.345055055189613\\
0.0625	0.357112106850395\\
0.03125	0.514725776700479\\
0.015625	1.2613277825365\\
};
\addplot [color=mycolor2, line width=1.5pt, mark size=3.8pt, mark=o, mark options={solid, mycolor2}, forget plot]
  table[row sep=crcr]{%
0.25	0.491070088950502\\
0.125	5.47819925891153\\
0.0625	0.112625271611721\\
0.03125	0.0722974573351938\\
0.015625	0.0215457111618948\\
};
\addplot [color=mycolor3, line width=1.5pt, mark size=5.6pt, mark=o, mark options={solid, mycolor3}, forget plot]
  table[row sep=crcr]{%
0.125	0.000138358579716926\\
0.0625	1.54086681837892e-05\\
0.03125	1.59956882093285e-05\\
0.015625	4.03805446860225e-05\\
};
\addplot [color=mycolor4, line width=1.5pt, mark size=7.5pt, mark=o, mark options={solid, mycolor4}, forget plot]
  table[row sep=crcr]{%
0.125	4.22543576574089e-06\\
0.0625	2.88424088050177e-07\\
0.03125	1.80173660522889e-06\\
0.015625	3.99421451719062e-06\\
};
\addplot [color=mycolor5, line width=1.5pt, mark size=7.5pt, mark=x, mark options={solid, mycolor5}, forget plot]
  table[row sep=crcr]{%
0.25	11.5810606803323\\
0.125	8.92821926917347\\
0.0625	7.37568163036566\\
0.03125	5.1290547865098\\
0.015625	2.95007891323938\\
};
\addplot [color=mycolor6, line width=1.5pt, mark size=7.5pt, mark=diamond, mark options={solid, mycolor6}, forget plot]
  table[row sep=crcr]{%
0.25	7.31292880618133\\
0.125	7.38393318996578\\
0.0625	6.81395680373791\\
0.03125	4.9951256946915\\
0.015625	2.92799846089822\\
};
\addplot [color=black, dashed, line width=1.5pt, forget plot]
  table[row sep=crcr]{%
0.25	0.25\\
0.015625	0.015625\\
};
\addplot [color=black, dashdotted, line width=1.5pt, forget plot]
  table[row sep=crcr]{%
0.25	0.0625\\
0.015625	0.000244140625\\
};
\end{axis}
\end{tikzpicture}%
	\caption{Error in $L^2$- (left) and $\seminorm{\bullet}{V}$-norm (right) for the constant velocity field $b$ as in \cref{eq:b_cos_sin} and right-hand side $f\equiv 1$ with $\eps = 2^{-7}$.}
	\label{fig:convergence_2d_const}
\end{figure}
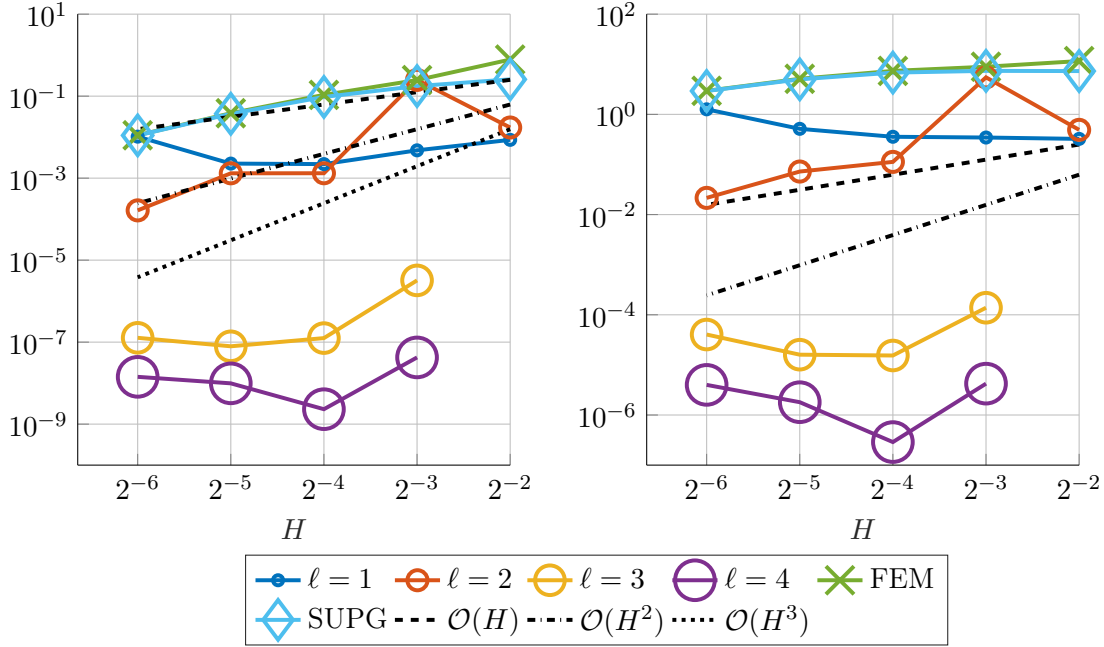

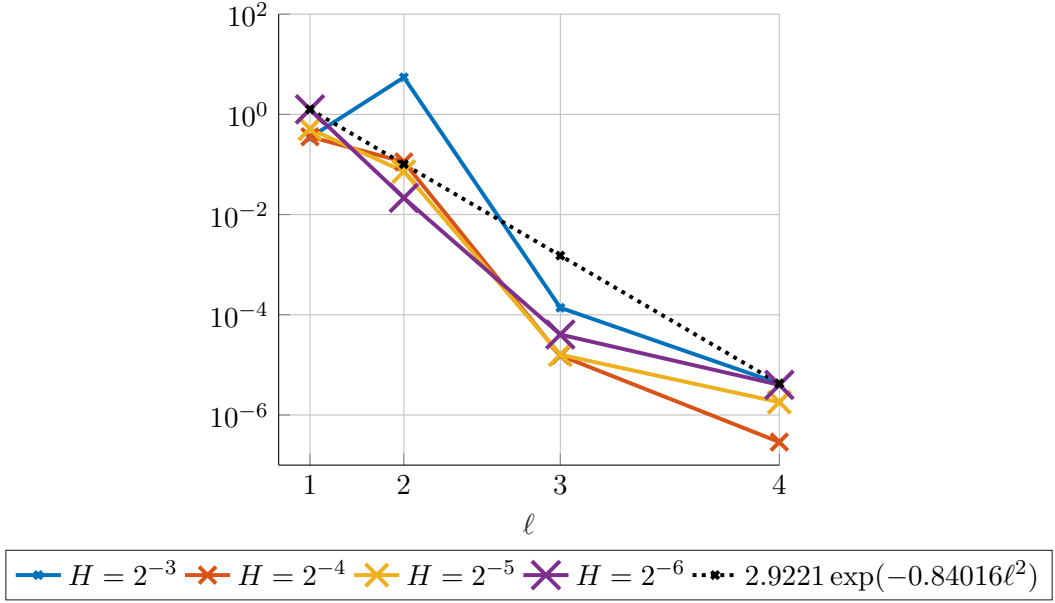
\begin{figure}
	\centering
	% This file was created by matlab2tikz.
%
%The latest updates can be retrieved from
%  http://www.mathworks.com/matlabcentral/fileexchange/22022-matlab2tikz-matlab2tikz
%where you can also make suggestions and rate matlab2tikz.
%
\definecolor{mycolor1}{rgb}{0.00000,0.44700,0.74100}%
\definecolor{mycolor2}{rgb}{0.85000,0.32500,0.09800}%
\definecolor{mycolor3}{rgb}{0.92900,0.69400,0.12500}%
\definecolor{mycolor4}{rgb}{0.49400,0.18400,0.55600}%
\begin{tikzpicture}

\begin{axis}[%
width=6.656cm,
height=6cm,
at={(0cm,0cm)},
scale only axis,
xmin=0,
xmax=16,
xtick={1,4,9,16},
xticklabels={{1},{2},{3},{4}},
xlabel style={font=\color{white!15!black}},
xlabel={$\ell$},
ymode=log,
ymin=1e-07,
ymax=100,
yminorticks=true,
ylabel style={font=\color{white!15!black}},
axis x line*=bottom,
axis y line*=left,
xmajorgrids,
ymajorgrids,
axis background/.style={fill=white},
legend style={at={(0.5,-0.3)}, anchor=south, legend columns=5, legend cell align=left, align=left, draw=white!15!black}
]

\addplot [color=mycolor1, line width=1.5pt, mark=x, mark options={solid, mycolor1}]
  table[row sep=crcr]{%
1	0.345055055189612\\
4	5.47819925891152\\
9	0.000138358579716926\\
16	4.2254357657409e-06\\
};
\addlegendentry{$H = 2^{-3}$}

\addplot [color=mycolor2, line width=1.5pt, mark size=4.5pt, mark=x, mark options={solid, mycolor2}]
  table[row sep=crcr]{%
1	0.357112106850397\\
4	0.112625271611721\\
9	1.54086681837892e-05\\
16	2.88424088050178e-07\\
};
\addlegendentry{$H = 2^{-4}$}

\addplot [color=mycolor3, line width=1.5pt, mark size=6.0pt, mark=x, mark options={solid, mycolor3}]
  table[row sep=crcr]{%
1	0.514725776700478\\
4	0.0722974573351941\\
9	1.59956882093285e-05\\
16	1.80173660522889e-06\\
};
\addlegendentry{$H = 2^{-5}$}

\addplot [color=mycolor4, line width=1.5pt, mark size=7.5pt, mark=x, mark options={solid, mycolor4}]
  table[row sep=crcr]{%
1	1.2613277825365\\
4	0.0215457111618948\\
9	4.03805446860226e-05\\
16	3.9942145171906e-06\\
};
\addlegendentry{$H = 2^{-6}$}

\addplot [color=black, dotted, line width=1.5pt, mark=x, mark options={solid, black}]
  table[row sep=crcr]{%
1	1.2613036705845\\
4	0.101434101726452\\
9	0.00151981665986239\\
16	4.24268999039379e-06\\
};
\addlegendentry{$2.9221\exp(-0.84016\ell^{2})$}

\end{axis}
\end{tikzpicture}%
	\caption{Decay of the localization error in $\seminorm{\bullet}{V}$-norm versus the oversampling parameter $\ell$ for $\eps = 2^{-7}$ and various values of $H$.}	
	\label{fig:sup_exp_decay_2d_const}
\end{figure}

The super-exponential convergence of \cref{con:sup_exp_decay} is numerically verified in \cref{fig:sup_exp_decay_2d_const}. Unfortunately, our method shows inaccuracies for refinements in $H$. Most likely, these are due to the selection of the basis functions as discussed in \cref{sec:numerical_implementation} and hence an improvement in this selection process could lead to a more accurate method. However, we chose to ensure stability and possibly lose some accuracy in return.

%%%%%%%%%%%%%%%%%%%%%%%%%%%%%%%%%%%%%%%%%%%%%%%%%%%%%%%%%%%%%%%%%%%%%%%%%
\subsubsection{Variable velocity and non-constant right-hand side}
%%%%%%%%%%%%%%%%%%%%%%%%%%%%%%%%%%%%%%%%%%%%%%%%%%%%%%%%%%%%%%%%%%%%%%%%%

In this example, we consider a varying velocity field $b$, given as
\begin{align}
b(x) = \begin{pmatrix}
-x_{2} & x_{1}
\end{pmatrix}^\top,\quad \text{for all }x\in\Omega. \label{eq:b_circle}
\end{align}
Hence, the penalization introduced in \cref{sec:numerical_implementation} varies with the different macroscopic cells in the coarse mesh. The flow in this example yields a boundary layer at the left boundary. We choose the non-constant right-hand side as $f(x) = \sin(\pi x_{1}) \cos(\pi x_{2})$. Thus, with respect to the error estimate in \cref{thm:SLOD_conv} the first expression in \cref{eq:SLOD_conv} does not vanish, and we expect a convergence of order one in the $\seminorm{\bullet}{V}$-norm as the right-hand side is regular enough. \Cref{fig:solutions_2d_variable} shows the reference solution as well as its FE and SLOD approximations. We again observe that the SLOD resolves the boundary layer, whereas the FEM delivers poor results.
\begin{figure}
	\centering
	\includegraphics[width=.3\linewidth]{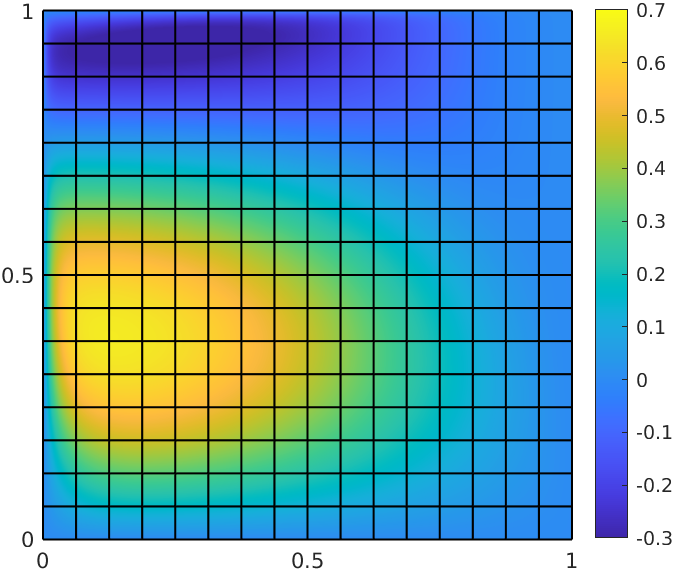}
	\includegraphics[width=.3\linewidth]{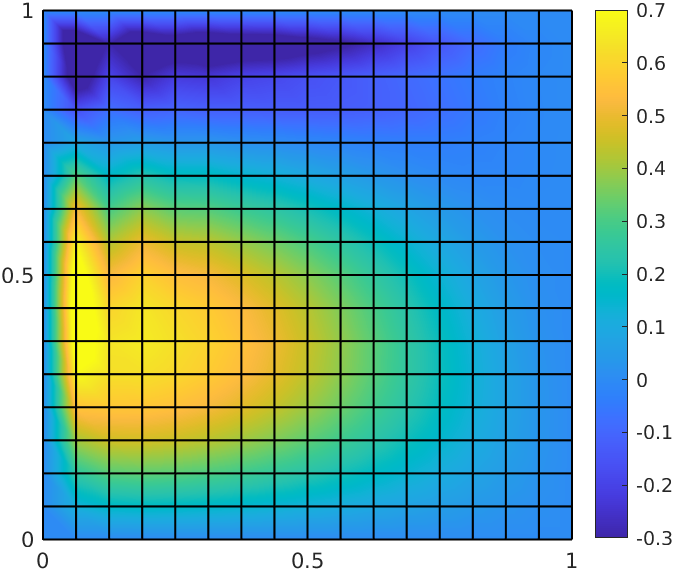}
	\includegraphics[width=.3\linewidth]{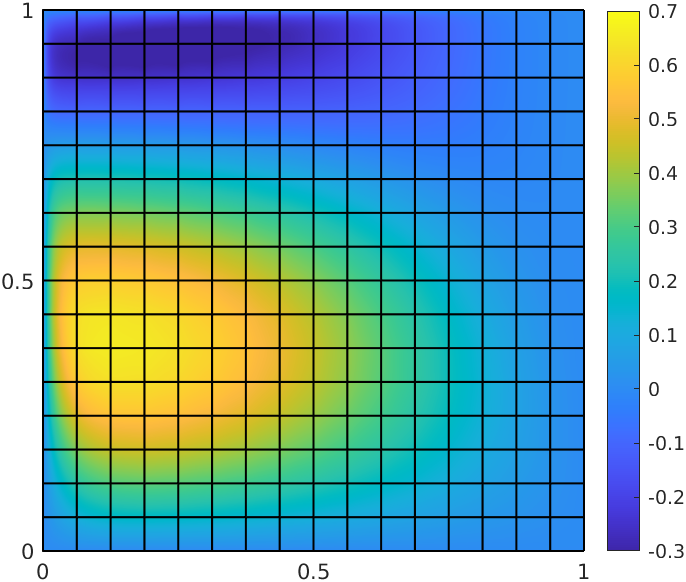}
	\caption{Reference solution computed on fine mesh with $h=2^{-10}$ (a), FE approximation (b) and SLOD approximation with $\ell= 1$ (c) on coarse mesh with $H=2^{-4}$ for the variable velocity field $b$ given in \cref{eq:b_circle}, right-hand side $f= \sin(\pi x_{1}) \cos(\pi x_{2})$ and $\eps = 2^{-7}$.}
	\label{fig:solutions_2d_variable}
\end{figure}

For non-constants right-hand sides, we expect improved approximation properties of the SLOD method~\cref{eq:VHell} with $\Pi_H f$ replaced by $f$. We will be refer to such method as SLOD-Galerkin. 
Note that the two methods produce different approximations and require different computational efforts. More in details, since both methods look for an approximation in the same ansatz space $\mcV_H^\ell$, they share the offline phase, namely, the computation of the set of operator-adapted local basis functions. On the other hand, they differ in the online phase, when the actual approximation to the solution is computed. In particular, the SLOD method is more online efficient than the SLOD-Galerkin. 

The convergence in the $L^2(\Omega)$- and $\seminorm{\bullet}{V}$-norm for both the SLOD and the SLOD-Galerkin are shown in \cref{fig:convergence_2d_variable}, where we observe second order convergence in the $\seminorm{\bullet}{V}$-norm in $H$ for both methods. In the $L^2(\Omega)$-norm, the SLOD-Galerkin has a convergence of order three, one order better than our proposed method, which is due to the additional information. 

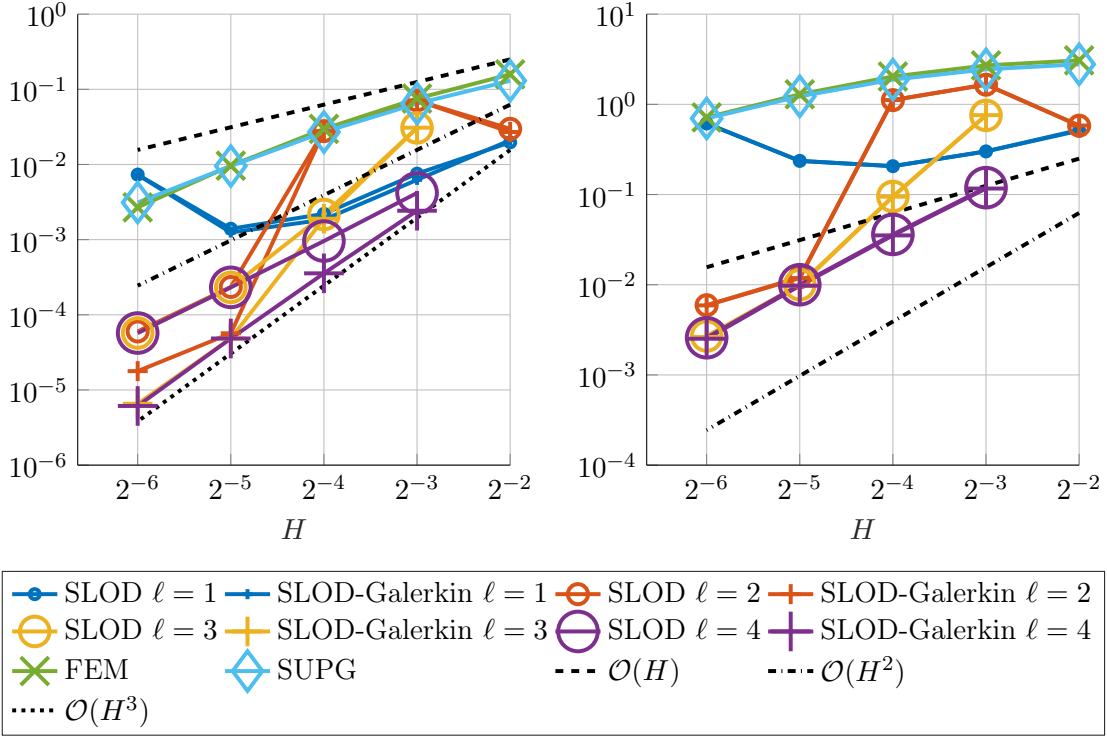
\begin{figure}
	\centering
	% This file was created by matlab2tikz.
%
%The latest updates can be retrieved from
%  http://www.mathworks.com/matlabcentral/fileexchange/22022-matlab2tikz-matlab2tikz
%where you can also make suggestions and rate matlab2tikz.
%
\definecolor{mycolor1}{rgb}{0.00000,0.44700,0.74100}%
\definecolor{mycolor2}{rgb}{0.85000,0.32500,0.09800}%
\definecolor{mycolor3}{rgb}{0.92900,0.69400,0.12500}%
\definecolor{mycolor4}{rgb}{0.49400,0.18400,0.55600}%
\definecolor{mycolor5}{rgb}{0.46600,0.67400,0.18800}%
\definecolor{mycolor6}{rgb}{0.30100,0.74500,0.93300}%
\begin{tikzpicture}

\begin{axis}[%
width=5.749cm,
height=6cm,
at={(0cm,0cm)},
scale only axis,
xmode=log,
xmin=0.01,
xmax=0.25,
xtick={0.015625,0.03125,0.0625,0.125,0.25},
xticklabels={{$2^{-6}$},{$2^{-5}$},{$2^{-4}$},{$2^{-3}$},{$2^{-2}$}},
xminorticks=false,
xlabel style={font=\color{white!15!black}},
xlabel={$H$},
ymode=log,
ymin=1e-06,
ymax=1,
yminorticks=false,
axis background/.style={fill=white},
axis x line*=bottom,
axis y line*=left,
xmajorgrids,
ymajorgrids,
legend style={at={(1.1,-0.6)}, anchor=south, legend columns=4, legend cell align=left, align=left, draw=white!15!black}
]
\addplot [color=mycolor1, line width=1.5pt, mark size=1.9pt, mark=o, mark options={solid, mycolor1}]
  table[row sep=crcr]{%
0.25	0.0197153894210973\\
0.125	0.00754902080331059\\
0.0625	0.00220324266360721\\
0.03125	0.00139573561573549\\
0.015625	0.0073309295499131\\
};
\addlegendentry{SLOD  $\ell =1$}

\addplot [color=mycolor1, line width=1.5pt, mark size=1.9pt, mark=+, mark options={solid, mycolor1}]
  table[row sep=crcr]{%
0.25	0.0210461356088747\\
0.125	0.00623721334498726\\
0.0625	0.00184546030389234\\
0.03125	0.00124470713738967\\
0.015625	0.00728330509048191\\
};
\addlegendentry{SLOD-Galerkin  $\ell =1$}

\addplot [color=mycolor2, line width=1.5pt, mark size=3.8pt, mark=o, mark options={solid, mycolor2}]
  table[row sep=crcr]{%
0.25	0.0297920791635263\\
0.125	0.0710172627006931\\
0.0625	0.0277959132388842\\
0.03125	0.00023454229946089\\
0.015625	6.00346932172156e-05\\
};
\addlegendentry{SLOD  $\ell =2$}

\addplot [color=mycolor2, line width=1.5pt, mark size=3.8pt, mark=+, mark options={solid, mycolor2}]
  table[row sep=crcr]{%
0.25	0.0270060057674967\\
0.125	0.0709589907880768\\
0.0625	0.0277467794986627\\
0.03125	5.67208426275005e-05\\
0.015625	1.78029308910442e-05\\
};
\addlegendentry{SLOD-Galerkin  $\ell =2$}

\addplot [color=mycolor3, line width=1.5pt, mark size=5.6pt, mark=o, mark options={solid, mycolor3}]
  table[row sep=crcr]{%
0.125	0.0306651569254923\\
0.0625	0.00215983904491433\\
0.03125	0.000231719219144313\\
0.015625	5.72959164163961e-05\\
};
\addlegendentry{SLOD  $\ell =3$}

\addplot [color=mycolor3, line width=1.5pt, mark size=5.6pt, mark=+, mark options={solid, mycolor3}]
  table[row sep=crcr]{%
0.125	0.030833221617003\\
0.0625	0.00190000499041617\\
0.03125	4.83810112953034e-05\\
0.015625	6.54645860350975e-06\\
};
\addlegendentry{SLOD-Galerkin  $\ell =3$}

\addplot [color=mycolor4, line width=1.5pt, mark size=7.5pt, mark=o, mark options={solid, mycolor4}]
  table[row sep=crcr]{%
0.125	0.00417913710937891\\
0.0625	0.000957781188665228\\
0.03125	0.000231721197487305\\
0.015625	5.72456833943152e-05\\
};
\addlegendentry{SLOD  $\ell =4$}

\addplot [color=mycolor4, line width=1.5pt, mark size=7.5pt, mark=+, mark options={solid, mycolor4}]
  table[row sep=crcr]{%
0.125	0.00242587572732592\\
0.0625	0.000356200145193131\\
0.03125	4.83781479039701e-05\\
0.015625	6.12489491974141e-06\\
};
\addlegendentry{SLOD-Galerkin  $\ell =4$}

\addplot [color=mycolor5, line width=1.5pt, mark size=7.5pt, mark=x, mark options={solid, mycolor5}]
  table[row sep=crcr]{%
0.25	0.158273517857702\\
0.125	0.0753248697115743\\
0.0625	0.0297361880943194\\
0.03125	0.00963745655378103\\
0.015625	0.00268630731663274\\
};
\addlegendentry{FEM}

\addplot [color=mycolor6, line width=1.5pt, mark size=7.5pt, mark=diamond, mark options={solid, mycolor6}]
  table[row sep=crcr]{%
0.25	0.129954510604428\\
0.125	0.0646418502353033\\
0.0625	0.0270249105661935\\
0.03125	0.00952128062639556\\
0.015625	0.00311776413909312\\
};
\addlegendentry{SUPG}

\addplot [color=black, dashed, line width=1.5pt]
  table[row sep=crcr]{%
0.25	0.25\\
0.015625	0.015625\\
};
\addlegendentry{$\mathcal{O}(H)$}

\addplot [color=black, dashdotted, line width=1.5pt]
  table[row sep=crcr]{%
0.25	0.0625\\
0.015625	0.000244140625\\
};
\addlegendentry{$\mathcal{O}(H^2)$}

\addplot [color=black, dotted, line width=1.5pt]
  table[row sep=crcr]{%
0.25	0.015625\\
0.015625	3.814697265625e-06\\
};
\addlegendentry{$\mathcal{O}(H^3)$}

\end{axis}

\begin{axis}[%
width=5.749cm,
height=6cm,
at={(7.564cm,0cm)},
scale only axis,
xmode=log,
xmin=0.01,
xmax=0.25,
xtick={0.015625,0.03125,0.0625,0.125,0.25},
xticklabels={{$2^{-6}$},{$2^{-5}$},{$2^{-4}$},{$2^{-3}$},{$2^{-2}$}},
xminorticks=false,
xlabel style={font=\color{white!15!black}},
xlabel={$H$},
ymode=log,
ymin=0.0001,
ymax=10,
yminorticks=false,
axis background/.style={fill=white},
axis x line*=bottom,
axis y line*=left,
xmajorgrids,
ymajorgrids
]
\addplot [color=mycolor1, line width=1.5pt, mark size=1.9pt, mark=o, mark options={solid, mycolor1}, forget plot]
  table[row sep=crcr]{%
0.25	0.509989985792755\\
0.125	0.300687509455867\\
0.0625	0.206275347315595\\
0.03125	0.236052793377346\\
0.015625	0.613674834156775\\
};
\addplot [color=mycolor1, line width=1.5pt, mark size=1.9pt, mark=+, mark options={solid, mycolor1}, forget plot]
  table[row sep=crcr]{%
0.25	0.520370823647188\\
0.125	0.300974462830501\\
0.0625	0.206450768202272\\
0.03125	0.236131980162322\\
0.015625	0.613731039460794\\
};
\addplot [color=mycolor2, line width=1.5pt, mark size=3.8pt, mark=o, mark options={solid, mycolor2}, forget plot]
  table[row sep=crcr]{%
0.25	0.579890038252885\\
0.125	1.65130517918154\\
0.0625	1.11362041632407\\
0.03125	0.0120627886907905\\
0.015625	0.00592847722543761\\
};
\addplot [color=mycolor2, line width=1.5pt, mark size=3.8pt, mark=+, mark options={solid, mycolor2}, forget plot]
  table[row sep=crcr]{%
0.25	0.582385000410855\\
0.125	1.66182825143697\\
0.0625	1.11522448386114\\
0.03125	0.0118602215559395\\
0.015625	0.00590257398548208\\
};
\addplot [color=mycolor3, line width=1.5pt, mark size=5.6pt, mark=o, mark options={solid, mycolor3}, forget plot]
  table[row sep=crcr]{%
0.125	0.754123285433902\\
0.0625	0.0952234954665771\\
0.03125	0.0100025255056288\\
0.015625	0.00269879757890277\\
};
\addplot [color=mycolor3, line width=1.5pt, mark size=5.6pt, mark=+, mark options={solid, mycolor3}, forget plot]
  table[row sep=crcr]{%
0.125	0.759327468959307\\
0.0625	0.0946675124354339\\
0.03125	0.00975524077748336\\
0.015625	0.00263750116893913\\
};
\addplot [color=mycolor4, line width=1.5pt, mark size=7.5pt, mark=o, mark options={solid, mycolor4}, forget plot]
  table[row sep=crcr]{%
0.125	0.11915644971494\\
0.0625	0.0361126253771928\\
0.03125	0.0100020525257479\\
0.015625	0.00258336807511859\\
};
\addplot [color=mycolor4, line width=1.5pt, mark size=7.5pt, mark=+, mark options={solid, mycolor4}, forget plot]
  table[row sep=crcr]{%
0.125	0.116574546075205\\
0.0625	0.0352260736969477\\
0.03125	0.00975476157514524\\
0.015625	0.00251925256823014\\
};
\addplot [color=mycolor5, line width=1.5pt, mark size=7.5pt, mark=x, mark options={solid, mycolor5}, forget plot]
  table[row sep=crcr]{%
0.25	3.07420329123004\\
0.125	2.70328520350054\\
0.0625	2.03525266364891\\
0.03125	1.29334610354386\\
0.015625	0.716232327345983\\
};
\addplot [color=mycolor6, line width=1.5pt, mark size=7.5pt, mark=diamond, mark options={solid, mycolor6}, forget plot]
  table[row sep=crcr]{%
0.25	2.78018819801733\\
0.125	2.44300170501598\\
0.0625	1.87952834229919\\
0.03125	1.22679562126683\\
0.015625	0.697872248909419\\
};
\addplot [color=black, dashed, line width=1.5pt, forget plot]
  table[row sep=crcr]{%
0.25	0.25\\
0.015625	0.015625\\
};
\addplot [color=black, dashdotted, line width=1.5pt, forget plot]
  table[row sep=crcr]{%
0.25	0.0625\\
0.015625	0.000244140625\\
};
\end{axis}
\end{tikzpicture}%
	\caption{Error in $L^2(\Omega)$- (left) and $\seminorm{\bullet}{V}$-norm (right) for the variable velocity field $b$ as in \cref{eq:b_circle} and right-hand side $f= \sin(\pi x_{1}) \cos(\pi x_{2})$. We consider the parameter $\eps = 2^{-7}$ and the proposed SLOD method as well as the SLOD-Galerkin.}
	\label{fig:convergence_2d_variable}
\end{figure}

%%%%%%%%%%%%%%%%%%%%%%%%%%%%%%%%%%%%%%%%%%%%%%%%%%%%%%%%%%%%%%%%%%%%%%%%
\subsection{Three-dimensional experiment}
\label{sec:3D}
%%%%%%%%%%%%%%%%%%%%%%%%%%%%%%%%%%%%%%%%%%%%%%%%%%%%%%%%%%%%%%%%%%%%%%%%

As mentioned before, the variational multi-scale stabilization method from \cite{Li-Peterseim-Schedensack} only works in one or two dimensions. Here we show that the super-localized variant is capable to approximate the solution even in a three-dimensional setup. In this configuration we again choose a constant velocity field $b$, which is given as
\begin{align}
\label{eq:b_3d}
b = \frac{\pi}{4}\begin{pmatrix}
1 & 1 & 1
\end{pmatrix}^\top.
\end{align}
The constant right-hand side is $f\equiv 1$. We expect a boundary layer around the top right corner at $\begin{pmatrix}
1 & 1 & 1
\end{pmatrix}^{\top}$. In the three-dimensional setting we compute the reference solution on a mesh with $h = 2^{-6}$, which resolves the chosen $\eps = 2^{-5}$. \Cref{fig:convergence_3d_a} shows the $L^2(\Omega)$- and $\seminorm{\bullet}{V}$-norm errors obtained for the SLOD method. As in our first experiment, due to the constant right-hand side we observe the localization error.
\begin{figure}
	\centering
	% This file was created by matlab2tikz.
%
%The latest updates can be retrieved from
%  http://www.mathworks.com/matlabcentral/fileexchange/22022-matlab2tikz-matlab2tikz
%where you can also make suggestions and rate matlab2tikz.
%
\definecolor{mycolor1}{rgb}{0.00000,0.44700,0.74100}%
\definecolor{mycolor2}{rgb}{0.85000,0.32500,0.09800}%
\definecolor{mycolor3}{rgb}{0.92900,0.69400,0.12500}%
\definecolor{mycolor4}{rgb}{0.49400,0.18400,0.55600}%
\definecolor{mycolor5}{rgb}{0.46600,0.67400,0.18800}%
\definecolor{mycolor6}{rgb}{0.30100,0.74500,0.93300}%
\begin{tikzpicture}

\begin{axis}[%
width=5.749cm,
height=6cm,
at={(0cm,0cm)},
scale only axis,
xmode=log,
xmin=0.055,
xmax=0.25,
xtick={0.0625,0.125,0.25},
xticklabels={{$2^{-4}$},{$2^{-3}$},{$2^{-2}$}},
xminorticks=true,
xlabel style={font=\color{white!15!black}},
xlabel={$H$},
ymode=log,
ymin=1e-09,
ymax=1,
yminorticks=true,
axis background/.style={fill=white},
title style={font=\bfseries},
axis x line*=bottom,
axis y line*=left,
xmajorgrids,
ymajorgrids,
legend style={at={(1.2,-0.4)}, anchor=south, legend columns=5, legend cell align=left, align=left, draw=white!15!black}
]
\addplot [color=mycolor1, line width=1.5pt, mark size=1.9pt, mark=o, mark options={solid, mycolor1}]
  table[row sep=crcr]{%
0.25	0.00359428272143508\\
0.125	0.00387724540793881\\
0.0625	0.0236712094013949\\
};
\addlegendentry{$\ell =1$}

\addplot [color=mycolor2, line width=1.5pt, mark size=3.8pt, mark=o, mark options={solid, mycolor2}]
  table[row sep=crcr]{%
0.25	0.0166065873775704\\
0.125	0.00071551572028192\\
0.0625	6.4529421085517e-05\\
};
\addlegendentry{$\ell =2$}

\addplot [color=mycolor3, line width=1.5pt, mark size=5.6pt, mark=o, mark options={solid, mycolor3}]
  table[row sep=crcr]{%
0.125	3.44038987737399e-07\\
0.0625	9.00438772090987e-07\\
};
\addlegendentry{$\ell =3$}

\addplot [color=mycolor4, line width=1.5pt, mark size=7.5pt, mark=o, mark options={solid, mycolor4}]
  table[row sep=crcr]{%
0.125	1.25640612655907e-08\\
0.0625	4.01416577934205e-08\\
};
\addlegendentry{$\ell =4$}

\addplot [color=mycolor5, line width=1.5pt, mark size=7.5pt, mark=x, mark options={solid, mycolor5}]
  table[row sep=crcr]{%
0.25	0.148539095543765\\
0.125	0.0585860890356043\\
0.0625	0.0174972409706877\\
};
\addlegendentry{FEM}

\addplot [color=mycolor6, line width=1.5pt, mark size=7.5pt, mark=diamond, mark options={solid, mycolor6}]
  table[row sep=crcr]{%
	0.25	0.127191184438981\\
	0.125	0.0556936282312964\\
	0.0625	0.017836730768807\\
};
\addlegendentry{SUPG}

\addplot [color=black, dashed, line width=1.5pt]
  table[row sep=crcr]{%
0.25	0.25\\
0.0625	0.0625\\
};
\addlegendentry{$\mathcal{O}(H)$}

\addplot [color=black, dashdotted, line width=1.5pt]
  table[row sep=crcr]{%
0.25	0.0625\\
0.0625	0.00390625\\
};
\addlegendentry{$\mathcal{O}(H^2)$}

\addplot [color=black, dotted, line width=1.5pt]
  table[row sep=crcr]{%
0.25	0.015625\\
0.0625	0.000244140625\\
};
\addlegendentry{$\mathcal{O}(H^3)$}

\end{axis}

\begin{axis}[%
width=5.749cm,
height=6cm,
at={(7.564cm,0cm)},
scale only axis,
xmode=log,
xmin=0.055,
xmax=0.25,
xtick={0.0625,0.125,0.25},
xticklabels={{$2^{-4}$},{$2^{-3}$},{$2^{-2}$}},
xminorticks=true,
xlabel style={font=\color{white!15!black}},
xlabel={$H$},
ymode=log,
ymin=1e-08,
ymax=10,
yminorticks=true,
axis background/.style={fill=white},
title style={font=\bfseries},
axis x line*=bottom,
axis y line*=left,
xmajorgrids,
ymajorgrids
]
\addplot [color=mycolor1, line width=1.5pt, mark size=1.9pt, mark=o, mark options={solid, mycolor1}, forget plot]
  table[row sep=crcr]{%
0.25	0.101220118482965\\
0.125	0.236396769271957\\
0.0625	0.830830305955967\\
};
\addplot [color=mycolor2, line width=1.5pt, mark size=3.8pt, mark=o, mark options={solid, mycolor2}, forget plot]
  table[row sep=crcr]{%
0.25	0.312644056411654\\
0.125	0.0312581453439664\\
0.0625	0.00600399044100064\\
};
\addplot [color=mycolor3, line width=1.5pt, mark size=5.6pt, mark=o, mark options={solid, mycolor3}, forget plot]
  table[row sep=crcr]{%
0.125	1.42287399389329e-05\\
0.0625	0.000100068241620617\\
};
\addplot [color=mycolor4, line width=1.5pt, mark size=7.5pt, mark=o, mark options={solid, mycolor4}, forget plot]
  table[row sep=crcr]{%
0.125	4.83724707952136e-07\\
0.0625	2.71558464531154e-06\\
};
\addplot [color=mycolor5, line width=1.5pt, mark size=7.5pt, mark=x, mark options={solid, mycolor5}, forget plot]
  table[row sep=crcr]{%
0.25	2.76700278486213\\
0.125	2.01341483678796\\
0.0625	1.17866809460072\\
};
\addplot [color=mycolor6, line width=1.5pt, mark size=7.5pt, mark=diamond, mark options={solid, mycolor6}, forget plot]
  table[row sep=crcr]{%
0.25	2.32419515035247\\
0.125	1.84868446855995\\
0.0625	1.14595560389424\\
};

\addplot [color=black, dashed, line width=1.5pt, forget plot]
  table[row sep=crcr]{%
0.25	0.25\\
0.0625	0.0625\\
};
\addplot [color=black, dashdotted, line width=1.5pt, forget plot]
  table[row sep=crcr]{%
0.25	0.0625\\
0.0625	0.00390625\\
};
\end{axis}
\end{tikzpicture}%
	\caption{Error in the $L^2(\Omega)$- (left) and $\seminorm{\bullet}{V}$-norm (right) for the constant velocity field $b$ as in \cref{eq:b_3d}, right-hand side $f\equiv 1$ and $\eps = 2^{-5}$, in the three-dimensional case.}
	\label{fig:convergence_3d_a}
\end{figure}
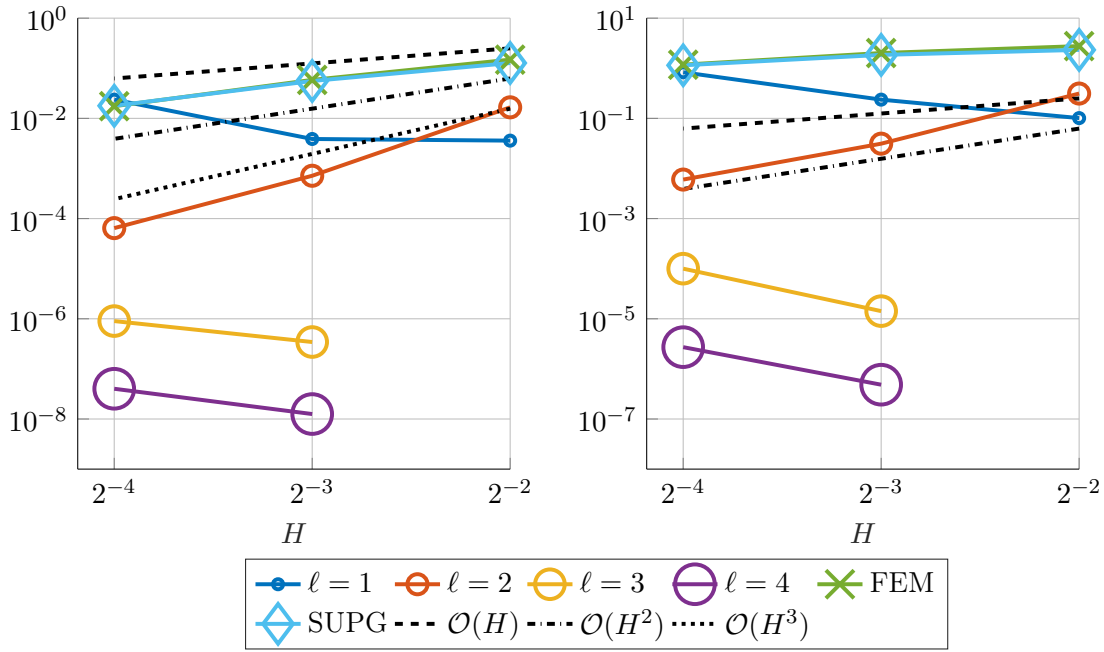
\Cref{con:sup_exp_decay}, for $d=3$, implies that the localization error behaves like $\exp(-C \ell^{\frac{3}{2}})$. \Cref{fig:convergence_3d_b} illustrates this super-exponential decay in the $\seminorm{\bullet}{V}$-norm.
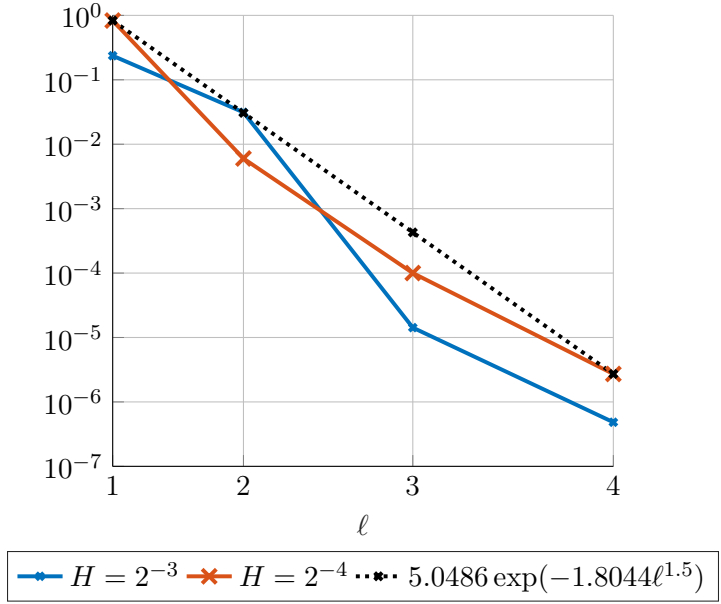
\begin{figure}
	\centering
	% This file was created by matlab2tikz.
%
%The latest updates can be retrieved from
%  http://www.mathworks.com/matlabcentral/fileexchange/22022-matlab2tikz-matlab2tikz
%where you can also make suggestions and rate matlab2tikz.
%
\definecolor{mycolor1}{rgb}{0.00000,0.44700,0.74100}%
\definecolor{mycolor2}{rgb}{0.85000,0.32500,0.09800}%
\begin{tikzpicture}

\begin{axis}[%
width=6.656cm,
height=6cm,
at={(0cm,0cm)},
scale only axis,
xmin=1,
xmax=8,
xtick={1,2.82842712474619,5.19615242270663,8},
xticklabels={{1},{2},{3},{4}},
xlabel style={font=\color{white!15!black}},
xlabel={$\ell$},
ymode=log,
ymin=1e-07,
ymax=1,
yminorticks=false,
ylabel style={font=\color{white!15!black}},
axis background/.style={fill=white},
axis x line*=bottom,
axis y line*=left,
xmajorgrids,
ymajorgrids,
legend style={at={(0.5,-0.3)}, anchor=south, legend columns=3, legend cell align=left, align=left, draw=white!15!black}
]
\addplot [color=mycolor1, line width=1.5pt, mark size=1.9pt, mark=x, mark options={solid, mycolor1}]
  table[row sep=crcr]{%
1	0.236396769271956\\
2.82842712474619	0.0312581453439663\\
5.19615242270663	1.42287399389328e-05\\
8	4.83724707952136e-07\\
};
\addlegendentry{$H = 2^{-3}$}

\addplot [color=mycolor2, line width=1.5pt, mark size=3.8pt, mark=x, mark options={solid, mycolor2}]
  table[row sep=crcr]{%
1	0.830830305955967\\
2.82842712474619	0.00600399044100062\\
5.19615242270663	0.000100068241620616\\
8	2.71558464531153e-06\\
};
\addlegendentry{$H = 2^{-4}$}

\addplot [color=black, dotted, line width=1.5pt, mark=x, mark options={solid, black}]
  table[row sep=crcr]{%
1	0.830829678514806\\
2.82842712474619	0.0306649475664031\\
5.19615242270663	0.000427705362382283\\
8	2.71565432273585e-06\\
};
\addlegendentry{$5.0486\exp(-1.8044\ell^{1.5})$}

\end{axis}
\end{tikzpicture}%
	\caption{Super-exponential decay of the $\seminorm{\bullet}{V}$-norm in oversampling parameter $\ell$ for the three-dimensional experiment.}
	\label{fig:convergence_3d_b}
\end{figure}

%%%%%%%%%%%%%%%%%%%%%%%%%%%%%%%%%%%%%%%%%%%%%%%%%%%%%%%%%%%%%%%%%%%%%%%%%%%%%%
\section{Concluding remarks and future developments}
%%%%%%%%%%%%%%%%%%%%%%%%%%%%%%%%%%%%%%%%%%%%%%%%%%%%%%%%%%%%%%%%%%%%%%%%%%%%%%

We have presented a novel multi-scale method for convection-dominated problems. The method follows the LOD framework and employs a novel super-localization strategy. The resulting SLOD significantly improves previous attempts to tackle convection-dominated problems in the under-resolved regime of large mesh P\'eclet numbers. While previously the SLOD largely improved the performance of already very efficient methods for model diffusion and Helmholtz problems~\cite{Hauck-Peterseim,Freese-Hauck-Peterseim}, the present paper demonstrates the true potential of the super-localization idea to enlarge the class of problems tractable by multi-scale methods. The numerically observed $\varepsilon$-independent convergence in two as well as three-dimensional experiments is justified to some extent by numerical analysis involving a-priori and a-posteriori techniques. 

Among the many promising future research directions are parameterized elliptic multi-scale problems to be treated by combining the SLOD with model order reduction techniques, following the ideas in~\cite{Abdulle-Henning}.
An interesting and relevant application from the physical point of view are wave propagation and scattering problems in highly heterogeneous structures. For such target, the SLOD has been recently proposed in~\cite{Freese-Hauck-Peterseim}, and model order reduction techniques for the parametric-in-frequency problem have been recently presented (see, e.g., \cite{Bonizzoni-Nobile-Perugia,Bonizzoni-Nobile-Perugia-Pradovera-a,Bonizzoni-Nobile-Perugia-Pradovera-b,Bonizzoni-Pradovera-Ruggeri,Bonizzoni-Pradovera}). 
Moreover, the possible improvement of numerical stochastic homogenization methods \cite{Gallistl-Peterseim,Fischer-Gallistl-Peterseim,Feischl-Peterseim} and uncertainty quantification techniques~\cite{Bonizzoni-Buffa-Nobile,Bonizzoni-Nobile-a,Bonizzoni-Nobile-b,Bonizzoni-Nobile-c} will be analyzed.

\section*{Acknowledgment}
The authors gratefully acknowledge Gabriel Barrenechea for fruitful discussion on the stability properties of the convection-dominated boundary value problem.

\bibliographystyle{alpha}
\bibliography{bib}

\newcommand{\etalchar}[1]{$^{#1}$}
\begin{thebibliography}{BNPP20b}

\bibitem[AH15]{Abdulle-Henning}
A.~Abdulle and P.~Henning.
\newblock A reduced basis localized orthogonal decomposition.
\newblock {\em Journal of Computational Physics}, 295:379--401, 2015.

\bibitem[AHP21]{Altmann-Henning-Peterseim}
R.~Altmann, P.~Henning, and D.~Peterseim.
\newblock Numerical homogenization beyond scale separation.
\newblock {\em Acta Numerica}, 30:1--–86, 2021.

\bibitem[Bak69]{Bakhvalov}
N.~S. Bakhvalov.
\newblock On the optimization of the methods for solving boundary value
  problems in the presence of a boundary layer.
\newblock {\em Zh. Vychisl. Mat. i Mat. Fiz.}, 9(4):841--859, 1969.

\bibitem[BBK17]{Barrenechea-Burman-Karakatsani}
G.~R. Barrenechea, E.~Burman, and F.~Karakatsani.
\newblock Edge-based nonlinear diffusion for finite element approximations of
  convection--diffusion equations and its relation to algebraic flux-correction
  schemes.
\newblock {\em Numerische Mathematik}, 135(2):521--545, 2017.

\bibitem[BBM18]{Berrone-Borio-Manzini}
S.~Berrone, A.~Borio, and G.~Manzini.
\newblock {SUPG} stabilization for the nonconforming virtual element method for
  advection–diffusion–reaction equations.
\newblock {\em Computer Methods in Applied Mechanics and Engineering},
  340:500--529, 2018.

\bibitem[BBN13]{Bonizzoni-Buffa-Nobile}
F.~Bonizzoni, A.~Buffa, and F.~Nobile.
\newblock Moment equations for the mixed formulation of the {H}odge {L}aplacian
  with stochastic loading term.
\newblock {\em IMA Journal of Numerical Analysis}, 34(4):1328--1360, 2013.

\bibitem[Beb03]{Bebendorf}
M.~Bebendorf.
\newblock A note on the {P}oincar{\'e} inequality for convex domains.
\newblock {\em Zeitschrift f{\"u}r Analysis und ihre Anwendungen},
  22(4):751--756, 2003.

\bibitem[BH82]{Brooks-Hughes}
A.~N. Brooks and T.~J.R. Hughes.
\newblock Streamline upwind/{P}etrov-{G}alerkin formulations for convection
  dominated flows with particular emphasis on the incompressible
  {N}avier-{S}tokes equations.
\newblock {\em Computer Methods in Applied Mechanics and Engineering},
  32(1):199--259, 1982.

\bibitem[BMS00]{Brezzi-Marini-Suli}
F.~Brezzi, D.~Marini, and E.~S{\"u}li.
\newblock Residual-free bubbles for advection-diffusion problems: the general
  error analysis.
\newblock {\em Numerische Mathematik}, 85(1):31--47, 2000.

\bibitem[BN12]{Bonizzoni-Nobile-c}
F.~Bonizzoni and F.~Nobile.
\newblock Perturbation analysis for the stochastic {D}arcy problem.
\newblock In {\em ECCOMAS 2012-European Congress on Computational Methods in
  Applied Sciences and Engineering}, pages 3926--3933, 2012.

\bibitem[BN14]{Bonizzoni-Nobile-a}
F.~Bonizzoni and F.~Nobile.
\newblock Perturbation analysis for the {D}arcy problem with log-normal
  permeability.
\newblock {\em SIAM/ASA Journal on Uncertainty Quantification}, 2(1):223--244,
  2014.

\bibitem[BN20]{Bonizzoni-Nobile-b}
F.~Bonizzoni and F.~Nobile.
\newblock Regularity and sparse approximation of the recursive first moment
  equations for the lognormal {D}arcy problem.
\newblock {\em Computers \& Mathematics with Applications}, 80(12):2925--2947,
  2020.

\bibitem[BNP18]{Bonizzoni-Nobile-Perugia}
F.~Bonizzoni, F.~Nobile, and I.~Perugia.
\newblock Convergence analysis of {P}ad{\'e} approximations for {H}elmholtz
  frequency response problems.
\newblock {\em ESAIM: Mathematical Modelling and Numerical Analysis},
  52(4):1261--1284, 2018.

\bibitem[BNPP20a]{Bonizzoni-Nobile-Perugia-Pradovera-a}
F.~Bonizzoni, F.~Nobile, I.~Perugia, and D.~Pradovera.
\newblock Fast least-{S}quares {P}ad{\'e} approximation of problems with normal
  operators and meromorphic structure.
\newblock {\em Mathematics of Computation}, 89(323):1229--1257, 2020.

\bibitem[BNPP20b]{Bonizzoni-Nobile-Perugia-Pradovera-b}
F.~Bonizzoni, F.~Nobile, I.~Perugia, and D.~Pradovera.
\newblock Least-{S}quares {P}ad{\'e} approximation of parametric and stochastic
  {H}elmholtz maps.
\newblock {\em Advances in Computational Mathematics}, 46(3):1--28, 2020.

\bibitem[BP21]{Bonizzoni-Pradovera}
F.~Bonizzoni and D.~Pradovera.
\newblock Shape optimization for a noise reduction problem by non-intrusive
  parametric reduced modeling.
\newblock In {\em 14th WCCM-ECCOMAS Congress 2020}, volume 700, 2021.

\bibitem[BR21]{Bonizzoni-Pradovera-Ruggeri}
D.~Bonizzoni, Pradovera and M.~Ruggeri.
\newblock Rational-based model order reduction of {H}elmholtz frequency
  response problems with adaptive finite element snapshots.
\newblock {\em arXiv preprint arXiv:2112.04302}, 2021.

\bibitem[CCEL16]{Calo-Chung-Efendiev-Leung}
V.~M. Calo, E.~T. Chung, Y.~Efendiev, and W.~T. Leung.
\newblock Multiscale stabilization for convection-dominated diffusion in
  heterogeneous media.
\newblock {\em Computer Methods in Applied Mechanics and Engineering},
  304:359--377, 2016.

\bibitem[CEL20]{Chung-Efendiev-Leung}
E.~T. Chung, Y.~Efendiev, and W.~T. Leung.
\newblock Multiscale stabilization for convection–diffusion equations with
  heterogeneous velocity and diffusion coefficients.
\newblock {\em Computers \& Mathematics with Applications}, 79(8):2336--2349,
  2020.

\bibitem[CL13]{Chung-Leung}
E.~T. Chung and W.~T. Leung.
\newblock A sub-grid structure enhanced discontinuous {G}alerkin method for
  multiscale diffusion and convection-diffusion problems.
\newblock {\em Communications in Computational Physics}, 14(2):370–392, 2013.

\bibitem[Cod00]{Codina}
R.~Codina.
\newblock Stabilization of incompressibility and convection through orthogonal
  sub-scales in finite element methods.
\newblock {\em Computer Methods in Applied Mechanics and Engineering},
  190(13):1579--1599, 2000.

\bibitem[CS05]{Cangiani-Suli}
A.~Cangiani and E.~S{\"u}li.
\newblock Enhanced residual-free bubble method for convection--diffusion
  problems.
\newblock {\em International journal for numerical methods in fluids},
  47(10-11):1307--1313, 2005.

\bibitem[DGN12]{Demkowicz-Gopalakrishnan-Niemi}
L.~Demkowicz, J.~Gopalakrishnan, and A.~H. Niemi.
\newblock A class of discontinuous {P}etrov–{G}alerkin methods. {P}art iii:
  Adaptivity.
\newblock {\em Applied Numerical Mathematics}, 62(4):396--427, 2012.

\bibitem[DLMN15]{degond_lozinski_muljadi_narski_2015}
P.~Degond, A.~Lozinski, B.~P. Muljadi, and J.~Narski.
\newblock Crouzeix-{R}aviart {M}s{FEM} with bubble functions for diffusion and
  advection-diffusion in perforated media.
\newblock {\em Communications in Computational Physics}, 17(4):887--907, 2015.

\bibitem[EG21a]{Ern-Guermond-2021-I}
A.~Ern and J.-L. Guermond.
\newblock {\em Finite elements {I}---{A}pproximation and interpolation},
  volume~72 of {\em Texts in Applied Mathematics}.
\newblock Springer, Cham, 2021.

\bibitem[EG21b]{Ern-Guermond-2021-II}
A.~Ern and J.-L. Guermond.
\newblock {\em Finite elements {II}---{G}alerkin approximation, elliptic and
  mixed {PDE}s}, volume~73 of {\em Texts in Applied Mathematics}.
\newblock Springer, Cham, 2021.

\bibitem[EJ93]{Eriksson-Johnson}
K.~Eriksson and C.~Johnson.
\newblock Adaptive streamline diffusion finite element methods for stationary
  convection-diffusion problems.
\newblock {\em Math. Comp.}, 60(201):167--188, S1--S2, 1993.

\bibitem[Elf15]{Elfverson}
D.~Elfverson.
\newblock A discontinuous {G}alerkin multiscale method for convection-diffusion
  problems.
\newblock {\em arXiv preprint arXiv:1509.03523}, 2015.

\bibitem[FFH92]{Franca-Frey-Hughes}
L.~P. Franca, S.~L. Frey, and T.~J.R. Hughes.
\newblock Stabilized finite element methods: I. application to the
  advective-diffusive model.
\newblock {\em Computer Methods in Applied Mechanics and Engineering},
  95(2):253--276, 1992.

\bibitem[FGP21]{Fischer-Gallistl-Peterseim}
J.~Fischer, D.~Gallistl, and D.~Peterseim.
\newblock A priori error analysis of a numerical stochastic homogenization
  method.
\newblock {\em SIAM Journal on Numerical Analysis}, 59(2):660--674, 2021.

\bibitem[FHM{\etalchar{+}}00]{Farrell-Hegarty-Miller-ORiordan-Shishkin}
P.~Farrell, A.~Hegarty, J.~M. Miller, E.~O'Riordan, and G.~I. Shishkin.
\newblock {\em Robust computational techniques for boundary layers}.
\newblock Chapman and hall/CRC, 2000.

\bibitem[FHP21]{Freese-Hauck-Peterseim}
P.~Freese, M.~Hauck, and D.~Peterseim.
\newblock Super-localized orthogonal decomposition for high-frequency
  {H}elmholtz problems.
\newblock {\em arXiv preprint arXiv:2112.11368}, 2021.

\bibitem[FP20]{Feischl-Peterseim}
M.~Feischl and D.~Peterseim.
\newblock Sparse compression of expected solution operators.
\newblock {\em SIAM Journal on Numerical Analysis}, 58(6):3144--3164, 2020.

\bibitem[GP19]{Gallistl-Peterseim}
D.~Gallistl and D.~Peterseim.
\newblock Numerical stochastic homogenization by quasi-local effective
  diffusion tensors.
\newblock {\em Communications in Mathematical Sciences}, 17(3):637--651, 2019.

\bibitem[HFH89]{Hughes-Franca-Hulbert}
T.~J.R. Hughes, L.~P. Franca, and G.~M. Hulbert.
\newblock A new finite element formulation for computational fluid dynamics:
  {VIII}. the {G}alerkin/least-squares method for advective-diffusive
  equations.
\newblock {\em Computer Methods in Applied Mechanics and Engineering},
  73(2):173--189, 1989.

\bibitem[HFMQ98]{Hughes-Feijoo-Mazzei-Quincy}
T.~J.~R. Hughes, G.~R. Feij\'{o}o, L.~Mazzei, and J.-B. Quincy.
\newblock The variational multiscale method---a paradigm for computational
  mechanics.
\newblock {\em Comput. Methods Appl. Mech. Engrg.}, 166(1-2):3--24, 1998.

\bibitem[HP21]{Hauck-Peterseim}
M.~Hauck and D.~Peterseim.
\newblock Super-localization of elliptic multiscale problems.
\newblock {\em arXiv preprint arXiv:2107.13211}, 2021.

\bibitem[HPV15]{Harder-Paredes-Valentin}
C.~Harder, D.~Paredes, and F.~Valentin.
\newblock On a multiscale hybrid-mixed method for advective-reactive dominated
  problems with heterogeneous coefficients.
\newblock {\em Multiscale Modeling \& Simulation}, 13(2):491--518, 2015.

\bibitem[HS07]{Hughes-Sangalli}
T.~J.~R. Hughes and G.~Sangalli.
\newblock Variational multiscale analysis: the fine‐scale {G}reen’s
  function, projection, optimization, localization, and stabilized methods.
\newblock {\em SIAM Journal on Numerical Analysis}, 45(2):539--557, 2007.

\bibitem[JKL06]{Volker-Songul-William}
V.~John, S.~Kaya, and W.~Layton.
\newblock A two-level variational multiscale method for convection-dominated
  convection–diffusion equations.
\newblock {\em Computer Methods in Applied Mechanics and Engineering},
  195(33):4594--4603, 2006.

\bibitem[Joh12]{Johnson}
C.~Johnson.
\newblock {\em Numerical solution of partial differential equations by the
  finite element method}.
\newblock Courier Corporation, 2012.

\bibitem[KPY18]{KPY18}
R.~Kornhuber, D.~Peterseim, and H.~Yserentant.
\newblock An analysis of a class of variational multiscale methods based on
  subspace decomposition.
\newblock {\em Math. Comp.}, 87(314):2765--2774, 2018.

\bibitem[KW14]{Kim-Wheeler}
M.-Y. Kim and M.~F. Wheeler.
\newblock A multiscale discontinuous {G}alerkin method for
  convection–diffusion–reaction problems.
\newblock {\em Computers \& Mathematics with Applications}, 68(12, Part
  B):2251--2261, 2014.
\newblock Advances in Computational Partial Differential Equations.

\bibitem[LD21]{Li-Demkowicz}
J.~Li and L.~Demkowicz.
\newblock An {L}p-{DPG} method for the convection–diffusion problem.
\newblock {\em Computers \& Mathematics with Applications}, 95:172--185, 2021.
\newblock Recent Advances in Least-Squares and Discontinuous
  {P}etrov–{G}alerkin Finite Element Methods.

\bibitem[LM72]{Lions-Magenes}
J.-L. Lions and E.~Magenes.
\newblock {\em Non-homogeneous boundary value problems and applications. {V}ol.
  {I}}.
\newblock Die Grundlehren der mathematischen Wissenschaften, Band 181.
  Springer-Verlag, New York-Heidelberg, 1972.
\newblock Translated from the French by P. Kenneth.

\bibitem[LM09]{Larson-Maalqvist}
M.~G. Larson and A.~M{\aa}lqvist.
\newblock An adaptive variational multiscale method for convection–diffusion
  problems.
\newblock {\em Communications in Numerical Methods in Engineering},
  25(1):65--79, 2009.

\bibitem[LPS17]{Li-Peterseim-Schedensack}
G.~Li, D.~Peterseim, and M.~Schedensack.
\newblock {Error analysis of a variational multiscale stabilization for
  convection-dominated diffusion equations in two dimensions}.
\newblock {\em IMA Journal of Numerical Analysis}, 38(3):1229--1253, 06 2017.

\bibitem[Mel02]{Melenk2002}
J.~M. Melenk.
\newblock Hp-finite element methods for singular perturbations.
\newblock {\em Lecture Notes in Mathematics}, 1796, 2002.

\bibitem[MOS12]{Miller-ORiordan-Shishkin}
J.~J.~H. Miller, E.~O'Riordan, and G.~I. Shishkin.
\newblock {\em Fitted Numerical Methods for Singular Perturbation Problems:
  Error Estimates in the Maximum Norm for Linear Problems in One and Two
  Dimensions}.
\newblock World Scientific, 2012.

\bibitem[MP14]{MalqvistPeterseim2014}
A.~M{\aa}lqvist and D.~Peterseim.
\newblock Localization of elliptic multiscale problems.
\newblock {\em Math. Comp.}, 83(290):2583--2603, 2014.

\bibitem[MP20]{Maalqvist-Peterseim}
A.~M{\aa}lqvist and D.~Peterseim.
\newblock {\em Numerical homogenization by localized orthogonal decomposition}.
\newblock SIAM, 2020.

\bibitem[Må11]{doi:10.1137/090775592}
A.~Målqvist.
\newblock Multiscale methods for elliptic problems.
\newblock {\em Multiscale Modeling \& Simulation}, 9(3):1064--1086, 2011.

\bibitem[OS19]{Owhadi-Scovel}
H.~Owhadi and C.~Scovel.
\newblock {\em Operator-Adapted Wavelets, Fast Solvers, and Numerical
  Homogenization: From a Game Theoretic Approach to Numerical Approximation and
  Algorithm Design}, volume~35.
\newblock Cambridge University Press, 2019.

\bibitem[Pet16]{Pet15LODreview}
D.~Peterseim.
\newblock Variational multiscale stabilization and the exponential decay of
  fine-scale correctors.
\newblock In {\em Building bridges: connections and challenges in modern
  approaches to numerical partial differential equations}, volume 114 of {\em
  Lect. Notes Comput. Sci. Eng.}, pages 341--367. Springer, Cham, 2016.

\bibitem[PH04]{PARK-HOU}
P.~J. Park and T.~Y. Hou.
\newblock Multiscale numerical methods for singularly perturbed
  convection-diffusion equations.
\newblock {\em International Journal of Computational Methods}, 01(01):17--65,
  2004.

\bibitem[PW60]{Payne-Weinberger}
L.~E. Payne and H.~F Weinberger.
\newblock An optimal {P}oincar{\'e} inequality for convex domains.
\newblock {\em Archive for Rational Mechanics and Analysis}, 5(1):286--292,
  1960.

\bibitem[QS16]{Qiu-Shi}
W.~Qiu and K.~Shi.
\newblock An {HDG} method for convection diffusion equation.
\newblock {\em Journal of Scientific Computing}, 66(1):346--357, 2016.

\bibitem[RST08]{Roos-Stynes-Tobiska}
H.~G. Roos, M.~Stynes, and L.~Tobiska.
\newblock {\em Robust numerical methods for singularly perturbed differential
  equations}, volume~24 of {\em Springer Series in Computational Mathematics}.
\newblock Springer-Verlag, Berlin, second edition, 2008.
\newblock Convection-diffusion-reaction and flow problems.

\bibitem[SB21]{Simon-Behrens}
K.~Simon and J.~Behrens.
\newblock Semi-lagrangian subgrid reconstruction for advection-dominant
  multiscale problems with rough data.
\newblock {\em Journal of Scientific Computing}, 87(2):1--33, 2021.

\bibitem[XWF21]{Xie-Wang-Feng}
C.~Xie, G.~Wang, and X.~Feng.
\newblock Variational multiscale virtual element method for the
  convection-dominated diffusion problem.
\newblock {\em Applied Mathematics Letters}, 117:107077, 2021.

\bibitem[ZC22]{Zhao-Chung}
L.~Zhao and E.~Chung.
\newblock Constraint energy minimizing generalized multiscale finite element
  method for convection diffusion equation.
\newblock {\em arXiv preprint arXiv:2203.16035v1}, 2022.

\end{thebibliography}
\end{document}